\def\frk{\frak}               
\def\Phi{{\frk n}}
\def\Phi{{\frk N}}
\def\opn#1#2{\def#1{\operatorname{#2}}} 
\opn\chara{char} \opn\length{\ell} \opn\pd{pd} \opn\rk{rk}
\opn\projdim{proj\,dim} \opn\injdim{inj\,dim} \opn\rank{rank}
\opn\depth{depth} \opn\grade{grade} \opn\height{height}
\opn\embdim{emb\,dim} \opn\codim{codim}
\opn\Tr{Tr} \opn\bigrank{big\,rank}
\opn\superheight{superheight}\opn\lcm{lcm}
\opn\trdeg{tr\,deg}
\opn\reg{reg} \opn\lreg{lreg} \opn\ini{in} \opn\lpd{lpd}
\opn\size{size}\opn\bigsize{bigsize}
\opn\cosize{cosize}\opn\bigcosize{bigcosize}
\opn\sdepth{sdepth}\opn\sreg{sreg}
\opn\link{link}\opn\fdepth{fdepth}
\opn\index{index}
\opn\index{index}
\opn\indeg{indeg}
\opn\N{N}
\opn\SSC{SSC}
\opn\SC{SC}
\opn\lk{lk}
\opn\div{div} \opn\Div{Div} \opn\cl{cl} \opn\Cl{Cl}
\opn\Spec{Spec} \opn\Supp{Supp} \opn\supp{supp} \opn\Sing{Sing}
\opn\Ass{Ass} \opn\Min{Min}\opn\Mon{Mon} \opn\dstab{dstab} \opn\astab{astab}
\opn\Syz{Syz}
\opn\reg{reg}
\opn\Ann{Ann} \opn\Rad{Rad} \opn\Soc{Soc}
\opn\Im{Im} \opn\Ker{Ker} \opn\Coker{Coker} \opn\Am{Am}
\opn\Hom{Hom} \opn\Tor{Tor} \opn\Ext{Ext} \opn\End{End}\opn\Der{Der}
\opn\Aut{Aut} \opn\id{id}
\opn\nat{nat}
\opn\pff{pf}
\opn\Pf{Pf} \opn\GL{GL} \opn\SL{SL} \opn\mod{mod} \opn\ord{ord}
\opn\Gin{Gin} \opn\Hilb{Hilb}\opn\sort{sort}
\opn\initial{init}
\opn\ende{end}
\opn\height{height}
\opn\type{type}
\opn\aff{aff} \opn\con{conv} \opn\relint{relint} \opn\st{st}
\opn\lk{lk} \opn\cn{cn} \opn\core{core} \opn\vol{vol}
\opn\link{link} \opn\Link{Link}\opn\lex{lex}
\opn\gr{gr}
\def\pot#1#2{#1[\kern-0.28ex[#2]\kern-0.28ex]}
\opn\dirlim{\underrightarrow{\lim}}
\opn\inivlim{\underleftarrow{\lim}}
\def\Implies{\ifmmode\Longrightarrow \else
        \unskip${}\Longrightarrow{}$\ignorespaces\fi}
\def\implies{\ifmmode\Rightarrow \else
        \unskip${}\Rightarrow{}$\ignorespaces\fi}
\def\iff{\ifmmode\Longleftrightarrow \else
        \unskip${}\Longleftrightarrow{}$\ignorespaces\fi}
\newtheorem{Theorem}{Theorem}[section]
 \newtheorem{Lemma}[Theorem]{Lemma}
 \newtheorem{Corollary}[Theorem]{Corollary}
 \newtheorem{Proposition}[Theorem]{Proposition}
 \newtheorem{Example}[Theorem]{Example}
 \newtheorem{Definition}[Theorem]{Definition}
  \newtheorem{Construction}[Theorem]{Construction}
\newtheorem{Notation}[Theorem]{Notation}
\newtheorem{Conventions}[Theorem]{Conventions}
\let\epsilon\varepsilon
\let\kappa=\varkappa
\def\qed{\ifhmode\textqed\fi
      \ifmmode\ifinner\quad\qedsymbol\else\dispqed\fi\fi}
\def\textqed{\unskip\nobreak\penalty50
       \hskip2em\hbox{}\nobreak\hfil\qedsymbol
       \parfillskip=0pt \finalhyphendemerits=0}
\def\dispqed{\rlap{\qquad\qedsymbol}}
\opn\dis{dis}
\def\pnt{{\raise0.5mm\hbox{\large\bf.}}}
\opn\Lex{Lex}
\begin{document}

 \title{The  resolutions of generalized co-letterplace ideals and their powers}
\author{Dancheng Lu}

\address{School  of Mathematical Sciences, Soochow University, 215006 Suzhou, P.R.China}
\email{ludancheng@suda.edu.cn}

\author{Zexin Wang}

\address{School  of Mathematical Sciences, Soochow University, 215006 Suzhou, P.R.China}
\email{zexinwang6@outlook.com}

 \begin{abstract} We present a natural and explicit multigraded minimal free resolution for each generalized co-letterplace ideal (see Definition~\ref{g}). Our resolution differs significantly from the ones presented in the works of Ene et al. \cite{EHM} and D'Al{`\i} et al. \cite{DFN}. Additionally, we show that each power of a large class of generalized co-letterplace ideals can be represented as the quotient of another generalized co-letterplace ideal by a regular sequence of variable differences.  Finally, we provide a new class of simplicial spheres.
 \end{abstract}
\keywords{Generalized co-letterplace ideals, Mapping cone, Minimal multigraded free resolution,  Simplicial sphere, Homological shift ideal}

\subjclass[2010]{Primary  13D02, 05E40 Secondary 52B55}

 \maketitle

\section{Introduction}

In the field of combinatorial commutative algebra, a significant research focus is on the search for the minimal multigraded resolutions for monomial ideals in a polynomial ring over a field under various restrictions. On the other hand, it is worth noting that the concepts of letterplace and co-letterplace ideals of partially ordered sets (posets) have been extensively explored and investigated in the literatures, as seen in \cite{DFN, DFN2, EHM, FGH} and \cite{JKM}. These ideals have been found to be closely related to numerous monomial ideals that have been studied in the other literatures, through the process of quotienting these ideals with a regular sequence of variable differences.  Notably, every co-letterplace ideal possesses linear quotients and, therefore, has a linear resolution. This characteristic has sparked significant interest in obtaining explicit minimal resolutions for such ideals.

 Let $P$ be a finite poset, and $n$ be a positive integer.  Let $R$  be the polynomial ring  $\mathbf{K}[X_{p,i}\mid \ p\in P,  i\in [n] ]$ over a field $\mathbf{K}$ in $|P|\times n$ variables.  Here,  $[n]$ represents the set $\{1,2,\ldots,n\}$. A map $f:P\rightarrow [n]$ is said to be {\it isotone} if $f(p)\leq f(q)$ whenever $p\leq q$ for all $p,q\in P$.  We denote $\mathrm{Hom}(P,n)$ as the set of all isotone maps  from $P$ to $[n]$.
To each map $f\in \mathrm{Hom}(P,n)$, we associate a monomial $$U_f:=\prod_{p\in P} X_{p, f(p)}\in R.$$
Then, the  ideal $\mathcal{L}(P,n)$ is defined as follows: $$\mathcal{L}(P,n):=(U_f\mid f\in \mathrm{Hom}(P,n))\subseteq R.$$
A {\it poset ideal} of a poset $P$ is a non-empty subset $I$ of $P$ such that if $y\in I$ and $x\in P$ with $x\leq y$ then $x\in I$.  Let $\mathfrak{A}$ be a poset ideal of  $\mathrm{Hom}(P,n)$, which is  a poset under the natural relation  $f\leq g$ if and only if $f(p)\leq g(p)$ for all $p\in P$.
The ideal $\mathcal{L}(P,n; \mathfrak{A})$ is given as: $$\mathcal{L}(P,n; \mathfrak{A}):=(U_f\mid f\in \mathfrak{A})\subseteq R.$$
Both  $\mathcal{L}(P,n)$ and $\mathcal{L}(P,n; \mathfrak{A})$ are referred as {\it co-letterplace} ideals.

In \cite[Theorem 3.6]{EHM}, an explicit multigraded minimal free resolution for the ideal $\mathcal{L}(P,n)$ is provided. This result is based on the observation that $\mathcal{L}(P,n)$ always admits a regular decomposition function, as defined in \cite{HT02}.

In \cite{Yan2000}, it was introduced a method for constructing minimal free resolutions of squarefree modules with a linear resolution. This method assigns a complex of finitely generated free $R$-modules, denoted by $\mathbb{F}.(M)$, to each finitely generated squarefree module $M$. If $M$ is a Cohen-Macaulay squarefree module, then the canonical  module of $M$, denoted by $\omega_M$, is also squarefree. Moreover, the complex $\mathbb{F}.(\omega_M)$ provides a minimal free resolution of the Alexander dual module $M^*$ of $M$.

 Building on this approach, an explicit multigraded minimal free resolution of $\mathcal{L}(P,n; \mathfrak{A})$ was presented in \cite[Theorem 4.5]{DFN}. This resolution is expressed in terms of the module structure of  the Cohen-Macaulay module $\omega_{R/\mathcal{L}^{\vee}(P,n; \mathfrak{A})}$, where $\mathcal{L}^{\vee}(P,n; \mathfrak{A})$ denotes the Alexander dual of $\mathcal{L}(P,n; \mathfrak{A})$.

  We now extend the notion of a co-letterplace ideal. Let $P$, $n$ and $R$ be the same as mentioned above.
  Consider a map $\mathcal{A}$ from  $P$ to $2^{[n]}\setminus \{\emptyset\}$, which assigns each $p\in P$ with a non-empty subset $\mathcal{A}_p$ of $[n]$. We define $\mathrm{Hom}(P,\mathcal{A})$ to be the set of all the isotone maps from $P$ to $[n]$ such that $f(p)\in \mathcal{A}_p$ for all $p\in P$.    Then, $\mathrm{Hom}(P, \mathcal{A})$ is also a poset naturally. Given a poset ideal  $\mathfrak{A}$  of $\mathrm{Hom}(P, \mathcal{A})$, let $\mathcal{L}(P, \mathcal{A};\mathfrak{A})$ (or $\mathcal{L}(\mathfrak{A})$ for short) denote the  monomial ideal in $R$ generated by $U_f$ with $f\in \mathfrak{A}$. Namely, $$\mathcal{L}(P, \mathcal{A};\mathfrak{A}):=(U_f\mid f\in \mathfrak{A})\subseteq R.$$
If $\mathfrak{A}=\mathrm{Hom}(P, \mathcal{A})$, then $\mathcal{L}(P, \mathcal{A};\mathfrak{A})$ is denoted by $\mathcal{L}(P, \mathcal{A})$ for short.

\begin{Definition} \label{g} \em We call a monomial ideal of the  form $\mathcal{L}(P, \mathcal{A};\mathfrak{A})$  a {\it generalized co-letterplace ideal}. The {\it size} of a  generalized co-letterplace ideal $\mathcal{L}=\mathcal{L}(P, \mathcal{A};\mathfrak{A})$ is defined as the sum of sizes of the sets $\mathcal{A}_p$ for all $p\in P$. Namely, $$\mathrm{size}(\mathcal{L}):=\sum_{p\in P}|\mathcal{A}_p|.$$
\end{Definition}

In this context, $\mathcal{A}_p$ is merely required to be a non-empty subset of $[n]$ for all $p \in P$, without the additional stipulation that it must form an interval. This may seem somewhat counterintuitive, given the significance of the total order on $[n]$.

This generalization has two advantages. First, within the category of   generalized co-letterplace ideal, every ideal $L$   has a Betti-splitting $L=J+K$ such that $J$, $K$ and $J\cap K$ all belong to this category, but with smaller size. This allows for a more efficient study of these ideals.  Secondly, the simplicial complexes associated with such ideals are also simplicial balls or spheres, as is the case with co-letterplace ideals in \cite{DFN}. This provides a larger class of simplicial spheres and expands the scope of possible applications.

In order to simplify the exposition and unify the notation, we introduce the following conventions.
\begin{Conventions}\label{setup} Throughout this paper, we always assume that $P$ is a finite poset and  $\mathcal{A}$ is a map from $P$ to the set of non-empty subsets of $[n]$.  Let $\mathfrak{A}$ be a poset ideal of $\mathrm{Hom}(P,\mathcal{A})$.

We will denote $P$ as $\{p_1,\ldots,p_m\}$, where if $p_i<p_j$ then $i<j$; and write $A_i$ for $\mathcal{A}_{p_i}$ for all possible $i,j$. Accordingly, sometimes we  write $\mathrm{Hom}(P, A_1,\ldots,A_m)$ for $\mathrm{Hom}(P,\mathcal{A})$ and $\mathrm{Hom}(P, A_1,\ldots,A_m; \mathfrak{A})$ for $\mathrm{Hom}(P,\mathcal{A}; \mathfrak{A})$.

We also employ the vector $(a_1,\ldots,a_m)$ with $a_i\in [n]$ to represent the map $f:P\rightarrow [n]$ such that $f(p_i)=a_i$ for all $i=1,\ldots,m$.

For a non-empty subset $K$ of $[n]$, we use  $\widehat{K}$ to represent $\max(K)$, the maximum of $K$, and use $X_{p_i,K}$ to denote the monomial $\prod_{a\in K}X_{p_i,a}$.
\end{Conventions}

The main result of this paper is as follows:
\begin{Theorem} \label{main} {\em (Theorem~\ref{main4.4})} Under  Conventions~\ref{setup}, the ideal   $\mathcal{L}(\mathfrak{A})$ has a multigraded  minimal free resolution of the following form:
\begin{equation*}\mathbb{F}: \xymatrix{0\ar[r] &F_p\ar[r]^{d_p}&F_{p-1}\ar[r]^{d_{p-1}}&\cdots\ar[r]^{d_1}&F_0\ar[r]^{d_0}&\mathcal{L}(\mathfrak{A})\ar[r]&0.}
\end{equation*} Here, the bases of $F_t, t=0,\ldots,p$ are given as follows:
\begin{itemize}
\item For $0\leq t\leq p$, the basis of $F_t$ consists of  the symbols  $[K_1,\ldots, K_m]$ with the multidegree $X_{p_1,K_1}\cdots X_{p_m,K_m}$, satisfying  following conditions: \begin{enumerate}
                                                              \item $\emptyset\neq K_1\subseteq A_1,\ldots,  \emptyset\neq  K_m\subseteq A_m$;
                                                               \item $|K_1|+\cdots+|K_m|=m+t$;
                                                              \item $\max(K_i)\leq \min(K_j)$ if $p_i<p_j$ for all $1\leq i<j\leq m$;
                                                              \item $(\widehat{K_1}, \widehat{K_2}, \ldots, \widehat{K_m})\in \mathfrak{A}$.
                                                            \end{enumerate}
                                                            It should be noted that the integer $p$, i.e., the projective dimension of $\mathcal{L}(\mathfrak{A})$, is the maximum value of $t$ for which there exist $K_1,\ldots,K_m$ satisfying the four conditions mentioned above.

The differential maps $d_t, t=0,\ldots,p $ are as follows:

    \item If $[K_1,\cdots,K_m]\in F_0$, then $d_0([K_1,\cdots,K_m])=X_{p_1,K_1}X_{p_2,K_2}\ldots X_{p_m,K_m}\in \mathcal{L}(\mathfrak{A});$
    \item If $t\geq 1$ and $[K_1,\cdots,K_m]\in F_t$, we denote $$\Omega=\{X_{p_i,a}\mid 1\leq i\leq m, |K_i|\geq 2, a\in K_i\}$$ and  $$\overline{\Omega}=\{X_{p_i,a}\mid 1\leq i\leq m,  a\in K_i\}.$$
 Define a total order on $\overline{\Omega}$: $X_{p_i,a}>X_{p_j,b}$ if and only if either $i<j$ or $i=j$ and $a<b$. Let $\sigma(X_{p_i,a},\overline{\Omega})$ be the number of elements in $\overline{\Omega}$ that are  larger than  $X_{p_i,a}$. Then  $$d_t([K_1,\ldots,K_m])=\sum_{X_{p_i,a}\in \Omega}(-1)^{\sigma(X_{p_i,a},\overline{\Omega})}X_{p_i,a}[K_1, \ldots, K_i\setminus\{a\},\ldots, K_m].$$
    \end{itemize}
\end{Theorem}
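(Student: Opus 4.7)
The plan is to proceed by induction on $\mathrm{size}(\mathcal{L}(\mathfrak{A}))$. In the base case each $\mathcal{A}_{p_i}$ is a singleton, so $\mathcal{L}(\mathfrak{A})$ is a principal monomial ideal and $\mathbb{F}$ collapses to the obvious resolution $0 \to R \to \mathcal{L}(\mathfrak{A}) \to 0$.

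For the inductive step I would exhibit a Betti-splitting $\mathcal{L}(\mathfrak{A}) = J + K$ in which $J$, $K$ and $J \cap K$ are again generalized co-letterplace ideals (possibly up to multiplication by a monomial) of strictly smaller size, and then assemble $\mathbb{F}$ as the mapping cone of the induced chain map
\[
\mathbb{F}(J \cap K) \longrightarrow \mathbb{F}(J) \oplus \mathbb{F}(K)
\]
arising from the short exact sequence $0 \to J \cap K \to J \oplus K \to \mathcal{L}(\mathfrak{A}) \to 0$. A natural candidate is to set $c = \max\{g(p_m) : g \in \mathfrak{A}\}$, split $\mathfrak{A} = \mathfrak{A}_1 \sqcup \mathfrak{A}_2$ according to whether $g(p_m) = c$ or $g(p_m) < c$, and take
\[
J = \mathcal{L}(\mathfrak{A}_1) = X_{p_m, c} \cdot J', \qquad K = \mathcal{L}(P, A_1, \ldots, A_{m-1}, A_m \setminus \{c\}; \mathfrak{A}_2),
\]
where $J'$ is a generalized co-letterplace ideal over $P \setminus \{p_m\}$ whose ranges are appropriately truncated at $c$. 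Both $J$ (through $J'$) and $K$ visibly have smaller size. The delicate task is to verify that $J \cap K$ likewise admits a description of the required form with smaller size; this amounts to reorganizing the lcms of pairs of generators of $J$ and $K$ as generators of a suitable $\mathcal{L}(P'', \mathcal{A}''; \mathfrak{A}'')$, possibly after factoring out a common monomial.

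With the inductive hypothesis supplying minimal resolutions of the stated form for $J$, $K$ and $J \cap K$, the identification of the mapping cone with $\mathbb{F}$ proceeds by a trichotomy on the last block $K_m$ of a basis symbol $[K_1, \ldots, K_m]$: symbols with $\widehat{K_m} < c$ account for the basis of $\mathbb{F}(K)$; symbols with $K_m = \{c\}$ account for $\mathbb{F}(J)$ via the factorization $J = X_{p_m, c} J'$; and symbols with $c \in K_m$ and $|K_m| \geq 2$ account for $\mathbb{F}(J \cap K)$ after a homological shift by one and after stripping $c$ from $K_m$. Minimality of the mapping cone follows from the fact that the splitting is Betti: the connecting map has no unit entries, because at each degree the matrix coefficients carry either a factor of $X_{p_m, c}$ on the $K$-side or a variable missing from the source on the $J$-side. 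The hardest step will be the sign bookkeeping: one must verify that the mapping cone differential, re-expressed in the combined basis, reproduces exactly the signs $(-1)^{\sigma(X_{p_i, a}, \overline{\Omega})}$ from the stated formula, which requires carefully reconciling the three sign conventions inherited from the inductive subresolutions with the single total order placed on $\overline{\Omega}$ in the theorem.
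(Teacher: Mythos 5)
Your overall architecture (induction on size, a Betti splitting with both pieces again generalized co-letterplace ideals, mapping cone, trichotomy of the basis symbols, minimality from the splitting being Betti) is the same as the paper's, but the specific splitting you chose breaks down at exactly the step you flag as ``delicate'', and this is a genuine gap rather than routine bookkeeping. Splitting at the maximal element $p_m$ on the maximal value $c$ does \emph{not} give $J\cap K = X_{p_m,c}\,K$: a poset ideal $\mathfrak{A}$ is only closed downwards, so raising the $p_m$-coordinate of a map in $\mathfrak{A}_2$ up to $c$ may leave $\mathfrak{A}$. Concretely, take $P=\{p_1,p_2\}$ an antichain, $A_1=A_2=\{1,2\}$, and $\mathfrak{A}=\{(1,1),(2,1),(1,2)\}$; then $c=2$, $J=(X_{p_1,1}X_{p_2,2})$, $K=(X_{p_1,1}X_{p_2,1},\,X_{p_1,2}X_{p_2,1})$, and $X_{p_1,2}X_{p_2,1}X_{p_2,2}\in X_{p_2,2}K\setminus (J\cap K)$. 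What is true is $J\cap K = X_{p_m,c}\,\mathcal{L}(\mathfrak{B})$, where $\mathfrak{B}$ consists of those $f$ with $f(p_m)<c$ such that the map obtained from $f$ by resetting its $p_m$-value to $c$ still lies in $\mathfrak{A}$; this $\mathfrak{B}$ is in general strictly smaller than your $\mathfrak{A}_2$, and your trichotomy (symbols with $c\in K_m$, $|K_m|\ge 2$ matching $\mathbb{F}(J\cap K)$) is only consistent with this corrected intersection --- with $X_{p_m,c}K$ the mapping cone would not even have the right Betti numbers, because condition (4), namely $(\widehat{K_1},\ldots,\widehat{K_{m-1}},c)\in\mathfrak{A}$, can fail. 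So your plan needs either this identification proved, or the splitting changed.

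The paper sidesteps this entirely by splitting at a \emph{minimal} element of $P$ on the \emph{minimal} value $a_1\in A_1$: lowering a coordinate preserves both isotonicity and membership in the downward-closed $\mathfrak{A}$, so one gets the clean Lemma~\ref{Betti}, $\mathcal{J}\cap\mathcal{K}=X_{p_1,a_1}\mathcal{K}$, and the inductive hypothesis applies verbatim to the intersection; the degenerate case (your $\mathfrak{A}_2=\emptyset$, the paper's $|A_1|=1$) is handled by the reduction $\mathcal{L}=X_{p_1,a_1}\mathcal{L}'$ to a smaller poset, which you should also state explicitly. Beyond the intersection, be aware that most of the actual work in the paper's proof of Theorem~\ref{main4.4} lies in Steps 2 and 3: explicitly constructing the comparison maps $f_t$ lifting $u\mapsto(-u,u)$ (which requires a nontrivial case analysis on $|K_1|$) and verifying that the mapping-cone differential reproduces the signs $(-1)^{\sigma(X_{p_i,a},\overline{\Omega})}$; your proposal defers all of this, and also silently assumes the splitting is Betti, which requires the linear-resolution input (weak polymatroidality plus \cite[Proposition 3.1]{B}) rather than following from the absence of unit entries.
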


By comparing the resolutions presented in \cite[Theorem 3.6]{EHM} and \cite[Theorem 4.5]{DFN}, it becomes evident that the resolution provided in Theorem~\ref{main} is more accessible and natural.

The main results of this paper are summarized as follows. In addition to Theorem~\ref{main}, we demonstrate in Corollary~\ref{powersc} that the multigraded minimal free resolution of all powers of a generalized co-letterplace ideal of the form $\mathcal{L}(P,\mathcal{A})$  can also be explicitly expressed. Furthermore, we establish that the simplicial complex associated with a generalized co-letterplace ideal is either a simplicial ball or a simplicial sphere, thereby providing a large class of simplicial spheres. (It should be noted that the boundary of a simplicial ball is a simplicial sphere.)

The paper is organized as follows. Section 2 provides an overview of the basic notions and results. In Section 3, we prove that every generalized co-letterplace ideal has linear quotients and determine its homological shift ideals, which are shown to not have a linear resolution in general. Section 4 presents our main result, which is established through an induction process. In Section 5, we demonstrate that every power of a generalized co-letterplace ideal of the form $\mathcal{L}(P,\mathcal{A})$ is isomorphic to the quotient of another generalized co-letterplace ideal modulo a regular sequence consisting of the differences between two variables. However, the polarization of a power of a generalized co-letterplace ideal is not itself a generalized co-letterplace ideal in general. Finally, in  Section 6, we show that the simplicial complex associated with a generalized co-letterplace ideal is either a simplicial ball or a simplicial sphere, and we provide a classification for when it is a simplicial sphere in terms of the projective dimension of  the corresponding generalized co-letterplace ideal.

\section{Preliminaries}

In this section  we collect  basic notions and results we need in this article.
\subsection{Mapping cone}\label{mapping}
    The mapping cone is a method used to construct complexes, particularly useful for building long exact sequences in homological algebra. Given a chain map $f: \mathbb{A}\rightarrow \mathbb{B}$,  where  $\mathbb{A}=(A_n, d_n)_{n\in \mathbf{Z}}$ and  $\mathbb{B}=(B_n, \delta_n)_{n\in \mathbf{Z}}$ are chain complexes, we can construct a new complex by forming the mapping cone of this map.

Specifically, the mapping cone  $\mathbb{C}(f)=(C_n,\partial_n)$ is constructed as follows:

 (1)  $C_n=A_{n-1}\oplus B_{n}$ for $n\in \mathbf{Z}$;

 (2) $\partial_n=\begin{pmatrix}-d_{n-1}&0\\ f_{n-1}& \delta_n\end{pmatrix}$, i.e., $\partial_n \begin{pmatrix} a\\b \end{pmatrix}=\begin{pmatrix} - d_{n-1}(a)\\f_{n-1}(a)+\delta_n(b) \end{pmatrix}$ for any $a\in A_{n-1}$ and $b\in B_n$. In other words, $$\partial_n|_{A_{n-1}}=-d_{n-1}+f_{n-1}: A_{n-1}\rightarrow A_{n-2}\oplus B_{n-1} \mbox{ and }\partial_n|_{B_{n}}=\delta_n: B_{n}\rightarrow B_{n-1}.$$ This construction gives rise to a short exact sequence of complexes: $$0\rightarrow \mathbb{B}\rightarrow \mathbb{C}(f)\rightarrow \mathbb{A}[-1]\rightarrow 0.$$
  Given a short exact sequence of $R$-modules $0\rightarrow A\stackrel{f_{-1}}{\rightarrow} B\rightarrow C\rightarrow 0$ and suppose $\mathbb{A}$ and $\mathbb{B}$ are free resolutions of $A$ and $B$ respectively. There exists a chain map $f:\mathbb{A}\rightarrow \mathbb{B}$ that lifts $f_{-1}$. One can check that $\mathbb{C}(f)$ is a free resolution of $C$.  We refer to \cite[Page 106]{P} for more details on mapping cone.

 \subsection{Betti splitting}\label{betti}

 Let $I, J$ and $K$ be monomial ideals such that $G(I)$ is the disjoint union of $G(J)$ and $G(K)$. From the short exact sequence $$0\rightarrow J\cap K\stackrel{f_{-1}}{\rightarrow} J\oplus K\rightarrow I\rightarrow 0,$$
 together with the mapping cone construction applied to the map $f_{-1}$, we derive the following inequality: $$\beta_{i,j}(I)\leq \beta_{i,j}(J)+\beta_{i,j}(K)+\beta_{i-1,j}(J\cap K)   \mbox{ for all } i, j\geq 0.$$

 \begin{Definition} \em (see \cite{FHT})  Let $I,J$ and $K$ as above.  We call that $I=J+K$ is a {\it Betti-splitting} if $$\beta_{i,j}(I)= \beta_{i,j}(J)+\beta_{i,j}(K)+\beta_{i-1,j}(J\cap K)  \mbox{ for all } i, j\geq 0.$$
 \end{Definition}

 It is evident that $I=J+K$ is a Betti-splitting if and only if $\mathbb{C}(f)$ is a graded minimal free resolution of $I$. Here, $f$ is a chain map that lifts $f_{-1}$ to  the graded minimal free resolutions of  $J\cap K$  and  $J\oplus K$ .

 A sufficient condition for $I=J+K$ to be a Betti-splitting is as follows, see \cite[Proposition 3.1]{B}.

 \begin{Lemma} \label{component} Assume that $I,J$ and $K$ are as mentioned above.  If both $J$ and $K$ have linear resolutions, then $I=J+K$ forms a Betti-splitting.
 \end{Lemma}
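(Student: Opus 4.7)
The plan is to translate the Betti-splitting equality into the vanishing of the connecting maps on $\Tor$ arising from the short exact sequence $0\to J\cap K\to J\oplus K\to I\to 0$, and then verify that vanishing by a degree argument in which the linearity of the resolutions of $J$ and $K$ is used in an essential way. Concretely, applying $\Tor_\bullet^R(-,\mathbf{K})$ to this short exact sequence and splitting the resulting long exact sequence produces, for each $i$ and $j$,
$$\beta_{i,j}(I)=\beta_{i,j}(J)+\beta_{i,j}(K)+\beta_{i-1,j}(J\cap K)-\rk(\phi_{i,j})-\rk(\phi_{i-1,j}),$$
where $\phi_{i,j}\colon\Tor_i(J\cap K,\mathbf{K})_j\to\Tor_i(J,\mathbf{K})_j\oplus\Tor_i(K,\mathbf{K})_j$ is the map induced by inclusion. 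Hence the inequality recalled in Subsection~\ref{betti} becomes the desired equality precisely when every $\phi_{i,j}$ is zero.

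To prove this vanishing, let $d_J$ and $d_K$ be the degrees in which $J$ and $K$ are generated. Linearity of their resolutions gives $\Tor_i(J,\mathbf{K})_j=0$ unless $j=i+d_J$ and $\Tor_i(K,\mathbf{K})_j=0$ unless $j=i+d_K$, so the target of $\phi_{i,j}$ can only be nonzero when $j\in\{i+d_J,\,i+d_K\}$. It therefore suffices to show that for these two values of $j$ the source $\Tor_i(J\cap K,\mathbf{K})_j$ also vanishes.

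The key observation is that since $G(I)=G(J)\sqcup G(K)$ is the minimal generating set of $I$, no element of $G(J)$ can divide an element of $G(K)$ — otherwise the divided element would fail to be minimal in $I$ — and symmetrically. Because every minimal generator of $J\cap K$ is a least common multiple $\lcm(g,g')$ with $g\in G(J)$ and $g'\in G(K)$, the absence of divisibility among such pairs forces $\deg\lcm(g,g')>\max(d_J,d_K)$. Hence $J\cap K$ is generated in degrees $\geq\max(d_J,d_K)+1$, which implies $\Tor_i(J\cap K,\mathbf{K})_j=0$ for every $j\leq\max(d_J,d_K)+i$; in particular both candidate bidegrees $j=i+d_J$ and $j=i+d_K$ are covered. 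Combining the two steps gives $\phi_{i,j}=0$ for all $(i,j)$, exactly the Betti-splitting equality. The only delicate point is this degree estimate for $J\cap K$; the rest is a routine consequence of the long exact sequence in $\Tor$.
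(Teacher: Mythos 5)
Your proof is correct, and it is worth noting that the paper itself gives no argument for this lemma: it is quoted verbatim from Bolognini \cite[Proposition 3.1]{B} (the case you treat, where $J$ and $K$ have linear resolutions, goes back to Francisco--H\`a--Van Tuyl \cite{FHT}). Your route is essentially the standard one from those sources, made self-contained. The first half --- splitting the long exact sequence of $\Tor$ for $0\to J\cap K\to J\oplus K\to I\to 0$ to get $\beta_{i,j}(I)=\beta_{i,j}(J)+\beta_{i,j}(K)+\beta_{i-1,j}(J\cap K)-\rk(\phi_{i,j})-\rk(\phi_{i-1,j})$, so that vanishing of all $\phi_{i,j}$ gives the Betti-splitting equality --- is exactly the mechanism behind the inequality recalled in Subsection 2.2. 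The genuinely substantive step is your degree estimate, and it is sound: since $G(I)=G(J)\sqcup G(K)$ is a minimal generating set, no $g\in G(J)$ divides $g'\in G(K)$ and conversely, so each $\lcm(g,g')$ (and hence every minimal generator of $J\cap K$, as these lcm's generate the intersection of monomial ideals) has degree at least $\max(d_J,d_K)+1$; combined with the standard fact that $\beta_{i,j}(M)=0$ for $j<i+\operatorname{indeg}(M)$, the source and target of $\phi_{i,j}$ are never simultaneously nonzero, forcing $\phi_{i,j}=0$ in every bidegree. A pleasant by-product of your argument is that it covers $d_J\neq d_K$, i.e.\ $J$ and $K$ need not be generated in the same degree, although in the paper's application both are generated in degree $m$. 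The two implicit facts you rely on (generation of $J\cap K$ by lcm's, and the lower bound on shifts in a minimal resolution) are standard and correctly invoked, so there is no gap.
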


 \subsection{Homological shift ideals}
We quote the next definition from \cite{CF}.
\begin{Definition}\rm Let $I\subseteq R$ be a monomial ideal with minimal multigraded free resolution
$$
\mathbb{F}\ \ :\ \ 0 \rightarrow F_p \rightarrow F_{p-1} \rightarrow \cdots\rightarrow F_0 \rightarrow I\rightarrow 0,
$$
where $F_i = \bigoplus_{j=1}^{\beta_i(I)}S(-\mathbf{a}_{i,j})$. The vectors $\mathbf{a}_{i,j}\in\mathbf{Z}^n$, $i\ge0$, $j=1,\dots,\beta_i(I)$, are called the $i$th \textit{multigraded shifts} of $I$. The monomial ideal
	$$
	\mbox{HS}_i(I)\ :=\ ({X}^{\mathbf{a}_{i,j}}\mid j=1,\dots,\beta_i(I))
	$$
	is called the $i$th \textit{homological shift ideal} of $I$.
\end{Definition}

Note that $\mbox{HS}_0(I)=I$ and $\mbox{HS}_i(I)=(0)$ for $i<0$ or $i>\pd(I)$.\medskip

The main purpose of the theory of homological shift ideals is to understand what homological and combinatorial properties are enjoyed by all $\mbox{HS}_t(I)$, $t=0, \ldots, \pd(I)$.
\subsection{Homology spheres and balls} Due to its connection to the Upper Bound Conjecture, the concept of a simplicial sphere has been extensively studied in the literatures (see  \cite{BH} and also \cite{Bier,BjZi}). Recall that a simplicial complex $\Delta$ is a {\it simplicial sphere} (resp. {\it simplicial ball})  if $|\Delta|$, the geometric realization of $\Delta$, is homeomorphic to $S^d$ (resp. $B^d$) for some $d$. Here, $S^d=\{v\in \mathbf{R}^{d+1}\mid |v|=1\}$ and $B^d=\{v\in \mathbf{R}^d\mid |v|\leq 1\}$.  In \cite[page 1859]{Bj}, the explicit definitions of piecewise linear (PL for short) polyhedral balls and  spheres are provided. Restricting to simplicial complexes, we see that a simplicial complex $\Delta$ is a PL-ball precisely when a subdivision of $\Delta$  is a subdivision of a simplex. Therefore,  for a simplicial complex $\Delta$, it has to be a simplicial ball (resp. sphere)  if it is a PL ball (resp. sphere). In \cite{DFN}, the following notions are introduced as  generalizations of simplicial balls and spheres.
\begin{Definition}\em
Let $\Delta$ be a simplicial complex of dimension $d$ and let $\mathbf{K}$ be a field. We say $\Delta$ is a \emph{homology sphere} over $\mathbf{K}$ if, for every face $F \in \Delta$, one has that $\lk_{\Delta}F$ has the (reduced) homology over $\mathbf{K}$ of a $\mathrm{dim}(\mbox{lk}_{\Delta}F)$-dimensional sphere. In other words,
\[\widetilde{H}_i(\mbox{lk}_{\Delta}F, \mathbf{K}) \cong \begin{cases} \mathbf{K} & i = \mathrm{dim}(\mbox{lk}_{\Delta}F) \\
                               0 & i \neq \mathrm{dim}(\mbox{lk}_{\Delta}F)
                   \end{cases}
\]
for every face $F$ of $\Delta$.

We say $\Delta$ is a \emph{homology ball} over $\mathbf{K}$ if there exists a subcomplex $\Sigma$ of $\Delta$ such that:
\begin{itemize}
\item $\Sigma$ is a $(d-1)$-dimensional homology sphere over $\mathbf{K}$;
\item $\mbox{lk}_{\Delta}F$ is a homology  sphere if $F \notin \Sigma$ and otherwise it has zero homology.
\end{itemize}
Note that the facets of $\Sigma$ are those codimension $1$ faces of $\Delta$
that are contained in exactly one facet of  $\Delta$. We therefore refer to $\Sigma$ as
the $\emph{boundary}$ of $\Delta$, denoted by $\partial\Delta = \Sigma$.

\end{Definition}
\section{Linearity}
In this section, we  establish that every generalized co-letterplace ideal is weakly polymatoidal and thus has a linear resolution. Additionally, we present the minimal generating sets of the homological shift ideals associated with co-letterplace ideals. Furthermore, it is shown that these homological shift ideals do not have a linear resolution in general.

In the following definition, let $S:=\mathbf{K}[X_1,\ldots, X_t]$. If  $I$ is a monomial ideal, then $G(I)$ denotes the set of minimal monomial generators of $I$.

\begin{Definition} \label{w} \em Following \cite{KH}, we say that a monomial ideal $I$ of $S$  generating in a single degree   is {\it weakly polymatroidal} if for every pair of elements $u=X_1^{a_1}\cdots X_t^{a_t}$ and $v=X_1^{b_1}\cdots X_t^{b_t}$ of $G(I)$ with $a_1=b_1,\ldots,a_{q-1}=b_{q-1}$ and $a_q<b_q$,  there exists $q<p\leq t$
such that $w:=(X_qu)/X_p$ belongs to $G(I)$.
 \end{Definition}
 It was shown in \cite{KH} that every weakly polymatroidal monomial ideal has linear quotients and thus has a linear resolution.

\begin{Lemma} \label{weak} Let $P, n, \mathcal{A}$ and $\mathfrak{A}$ be the same as in Convention~\ref{setup}. Then $\mathcal{L}(\mathfrak{A})$ is weakly polymatroidal.
\end{Lemma}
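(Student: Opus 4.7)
The plan is to verify the weakly polymatroidal condition of Definition~\ref{w} directly, with respect to the lexicographic ordering on the variables in which $X_{p_i,a}$ precedes $X_{p_j,b}$ whenever $i<j$, or $i=j$ and $a<b$. Since every minimal generator of $\mathcal{L}(\mathfrak{A})$ has the form $U_f=\prod_k X_{p_k,f(p_k)}$ with $f\in\mathfrak{A}$, each is squarefree and uses exactly one variable from the block $\{X_{p_k,1},\ldots,X_{p_k,n}\}$ for every $k$.

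I would pick $u=U_f$ and $v=U_g$ in $G(\mathcal{L}(\mathfrak{A}))$ whose exponent vectors first differ at the variable $X_q=X_{p_i,a}$, with $X_q\mid v$ but $X_q\nmid u$, and extract what agreement on the earlier variables forces. Agreement across the entire block attached to each $p_j$ with $j<i$ gives $f(p_j)=g(p_j)$. Agreement within the $p_i$-block up to (but not including) $X_{p_i,a}$ then forces $f(p_i)>a$: otherwise the common value $f(p_i)=g(p_i)<a$ would contradict $g(p_i)=a$, a fact that is itself dictated by $X_q\mid v$.

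The natural candidate for the required relation is $w:=(X_q u)/X_{p_i,f(p_i)}=U_{f'}$, where $f'$ agrees with $f$ everywhere except $f'(p_i)=a$. This is in fact the only possible choice that can land in $G(\mathcal{L}(\mathfrak{A}))$, because dividing by any $X_{p_k,f(p_k)}$ with $k\neq i$ would leave a non-squarefree monomial (both $X_{p_i,a}$ and $X_{p_i,f(p_i)}$ would remain), and all generators of $\mathcal{L}(\mathfrak{A})$ are squarefree. The ordering condition $X_{p_i,f(p_i)}>X_q$ is automatic from $f(p_i)>a$.

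The main obstacle I anticipate is certifying $f'\in\mathfrak{A}$. Because $f'\le f$ pointwise and $\mathfrak{A}$ is a poset ideal, the task reduces to verifying $f'\in\mathrm{Hom}(P,\mathcal{A})$. Membership of $f'(p_i)=a$ in $A_i$ is inherited from $g$. For isotonicity the only comparison that is not inherited from $f$ involves $p_i$ and some $p_k$: if $p_k\le p_i$ then $k<i$ by Conventions~\ref{setup}, and $f(p_k)=g(p_k)\le g(p_i)=a=f'(p_i)$ by isotonicity of $g$; if $p_i\le p_k$ then we need $a\le f(p_k)$, and here the clever point is to use isotonicity of $f$ itself, which yields $f(p_k)\ge f(p_i)>a$. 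This two-sided use of isotonicity—$g$ for the downward direction and $f$ for the upward one—is the subtle step. Putting the pieces together confirms $f'\in\mathfrak{A}$, hence $w\in G(\mathcal{L}(\mathfrak{A}))$, and so $\mathcal{L}(\mathfrak{A})$ is weakly polymatroidal.
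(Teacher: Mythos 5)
Your proof is correct and takes essentially the same route as the paper: at the first position of disagreement you lower the larger value $f(p_i)>g(p_i)=a$ down to $a$, verify isotonicity of the modified map using $g$ for elements below $p_i$ (where $f$ and $g$ agree) and $f$ for elements above $p_i$, and then invoke the poset-ideal property of $\mathfrak{A}$ --- exactly the paper's argument with the names of $f$ and $g$ interchanged. Only your incidental remark that dividing by a variable from another block would leave a non-squarefree monomial is slightly off (the result is still squarefree, it merely fails to be of the form $U_h$), but nothing in your argument depends on it.
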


\begin{proof}  Denote  $\mathcal{L}(\mathfrak{A})$ as $\mathcal{L}$ for short. We order the variables as follows:
$$X_{p_1,1}>X_{p_1,2}>\cdots>X_{p_1,n}>X_{p_2,1}>\cdots>X_{p_2,n}>\cdots>X_{p_m,1}>\cdots>X_{p_m,n}.$$
This is to say $X_{p_1,1}$ is the $X_1$ in Definition~\ref{w},  $X_{p_1,2}$ is the $X_2$ in Definition~\ref{w}, and so on. This order will be maintained  throughout this paper.

Let $U_f,U_g\in G(\mathcal{L})$, where  $f,g$ are distinct maps in $\mathfrak{A}$. Let $X$ be the first (maximal) variable such that $\deg_X(U_f)\neq \deg_X(U_g)$. Assume $f(p_i)=g(p_i)$ for $i=1,\ldots,k-1$ and $f(p_k)<g(p_k)$. Then $X=X_{p_k,f(p_k)}$, as $1=\deg_X(U_f)> \deg_X(U_g)=0$.  To prove $\mathcal{L}$ is weakly polymatroidal, it suffices to show that $\frac{U_gX_{p_k,f(p_k)}}{Y}\in \mathcal{L}$ for some variable $Y<X_{p_k,f(p_k)}$. To this end, define a map: $h:P\rightarrow [n]$ as follows: $$h(p_i)=\left\{
                       \begin{array}{ll}
                         g(p_i), & \hbox{$i\neq k$;} \\
                         f(p_i), & \hbox{$i=k$.}
                       \end{array}
                     \right.
$$
Let us verify that $h$ is isotone. Suppose that $p_i<p_j$. If $k\notin\{i,j\}$, then $h(p_i)=g(p_i)\leq g(p_j)=h(p_j)$; if $k=i$, then $h(p_i)=f(p_k)<g(p_k)\leq g(p_j)=h(p_j)$; if $k=j$, then $i<k$, and thus $h(p_i)=f(p_i)=g(p_i)\leq g(p_j)=h(p_j)$. Therefore, $h$ is isotone  and so $h\in \mathrm{Hom}(P, A_1,\ldots,A_m)$. Consequently, since $h\leq g$, we conclude that $h\in \mathfrak{A}$.  Note that $X_{p_k,g(p_k)}<X_{p_k,f(p_k)}$ and $U_h=\frac{U_gX_{p_k,f(p_k)}}{X_{p_k,g(p_k)}}\in \mathcal{L}$, the result follows.\end{proof}

We next present minimal generating sets of homological shift ideals of $\mathcal{L}(\mathfrak{A})$. To do this, we define a linear order $\leq_{\ell}$ as follows: for any $f,g\in \mathrm{Hom}(P, A_1,\ldots,A_m)$,
$$f\leq_{\ell} g\Longleftrightarrow \mbox{the left-most nonzero entry of } (f-g) \mbox{ is negative}. $$
Here, $(f-g)$ denotes the $m$-tuple $$(f(p_1)-g(p_1), f(p_2)-g(p_2), \cdots, f(p_m)-g(p_m))\in \mathbf{Z}^m.$$
 This order is   a refinement of  the natural  partial order $\leq$ on  $\mathrm{Hom}(P, A_1,\ldots,A_m)$. In addition, it is easy to see that  $U_f\geq_{\mathrm{lex}} U_g$ if and only if $f\leq_{\ell} g$. Assuming that $f_1<_{\ell}f_2<_{\ell}f_3<_{\ell}\cdots <_{\ell} f_t$ are all maps in $\mathfrak{A}$, then,  in view of the proof of \cite[Theorem 1.4]{KH},  we see that  $\mathcal{L}(\mathfrak{A})$ has linear quotients with respect to the sequence $U_{f_1},U_{f_2},\ldots, U_{f_t}$.  Following \cite{HT02},   we use $\mathrm{set}(U_{f_i})$ to denote the set of the variables that appear in the colon ideal $(U_{f_1}, \ldots, U_{f_{i-1}}):U_{f_i}$ for $i=2,\ldots,t$.

 \begin{Lemma} \label{set} For each $2\leq i\leq t$, the set $\mathrm{set}(U_{f_i})$ is identical  to $\Delta_{f_i}$. Here, for $f\in \mathfrak{A}$, the set
 $\Delta_f$ is defined as follows: $$\Delta_f:=\{X_{p_s,k}\mid 1\leq s\leq m, k< f(p_s), k\in A_s,  f(p_i)\leq k \mbox { if } p_i< p_s \}.$$
 \end{Lemma}

 \begin{proof} Let  $X_{p_s,k}\in \Delta_{f_i}$. We define  a map $h$ from $P$  to $[n]$  by   $$h(p_j)=\left\{
                            \begin{array}{ll}
                              f_i(p_j), & \hbox{$j\neq s$;} \\
                              k, & \hbox{$j=s$.}
                            \end{array}
                          \right.$$  Then $h$ is isotone.
Since $h(p)\leq f_i(p)$  for all $p\in P$, it follows that $h$ belongs to $\mathfrak{A}$ and so $h=f_j$ for some $j< i$. Note that $U_h:U_{f_i}=(X_{p_s,k})$, we obtain that $X_{p_s,k}\in \mathrm{set}(U_{f_i})$, implying $\Delta_{f_i}\subseteq \mathrm{set}(U_{f_i})$.
Conversely, let $X_{p_s,k}\in \mathrm{set}(U_{f_i})$. Then, there exists $1\leq j<i$ such that  $U_{f_j}:U_{f_i}$ is generated $X_{p_s,k}$. Note that $f_j(p)=f_i(p)$ for all $p\in P$ with $p\neq p_s$ and $f_j(p_s)=k$. From this, it follows that $X_{p_s,k}$ belongs to $\Delta_{f_i}$, as required.
 \end{proof}

According to \cite[Lemma 1.5]{HT02},  the symbols $(f,\sigma)$,  where  $f\in \mathfrak{A}$ and $\sigma\subseteq \Delta_f,  |\sigma|=i$ form a free basis for the $i$-th syzygy of the ideal $\mathcal{L}(\mathfrak{A})$. Furthermore,   each symbol $(f,\sigma)$ has a multidegree of  $U_fX_{\sigma}$, where $X_{\sigma}:=\prod\limits_{X_{p_i,k}\in \sigma} X_{p_i,k}$.

\begin{Theorem}\label{generator} For $t\geq 0$, the minimal generating set of $\mathrm{HS}_t(\mathcal{L(\mathfrak{A})})$ consists of all monomials $$X_{p_1,K_1}X_{p_2,K_2}\cdots X_{p_m,K_m}$$ such that
\begin{enumerate}
  \item $\emptyset \neq K_i\subseteq A_i$ for all $i\in [m]$;
  \item  $\max (K_{i})\leq \min (K_j)$ if $p_{i}<p_j$;
    \item  $|K_1|+\cdots+|K_m|=m+t$;
    \item  the map $(\widehat{K_1}, \widehat{K_2}, \ldots, \widehat{K_m})$ belongs to $\mathfrak{A}$. \end{enumerate}

\end{Theorem}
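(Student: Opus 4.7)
The plan is to use the Herzog--Takayama free basis for the $t$-th syzygy of $\mathcal{L}(\mathfrak{A})$ that was recorded in the paragraph preceding the theorem: symbols $(f,\sigma)$ with $f\in\mathfrak{A}$, $\sigma\subseteq\Delta_f$, and $|\sigma|=t$, each of multidegree $U_fX_\sigma$. Because $\mathcal{L}(\mathfrak{A})$ has linear quotients (Lemma~\ref{weak}), this basis realizes a minimal multigraded free resolution, so the $t$-th multigraded shifts of $\mathcal{L}(\mathfrak{A})$ are precisely the multidegrees $U_fX_\sigma$ that occur. All such multidegrees have the common total degree $m+t$, so no one of them divides another; hence they coincide with the minimal generating set of $\mathrm{HS}_t(\mathcal{L}(\mathfrak{A}))$.

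The core combinatorial step is to establish a bijection between
\[ \{(f,\sigma):f\in\mathfrak{A},\ \sigma\subseteq\Delta_f,\ |\sigma|=t\} \]
and the set of tuples $(K_1,\ldots,K_m)$ satisfying conditions (1)--(4). Given $(f,\sigma)$, I would define $K_i=\{f(p_i)\}\cup\{k:X_{p_i,k}\in\sigma\}$; the defining constraints on $\Delta_f$ force each element of $K_i\setminus\{f(p_i)\}$ to lie in $A_i$ and to be strictly smaller than $f(p_i)$, so $\widehat{K_i}=f(p_i)$, which makes (1) automatic, yields (3) via $\sum_i|K_i|=m+|\sigma|=m+t$, and identifies (4) with $f\in\mathfrak{A}$. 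Condition (2) encodes precisely the two constraints baked into $\Delta_f$: the isotonicity of $f$, and the requirement ``$f(p_i)\le k$ whenever $p_i<p_j$ and $X_{p_j,k}\in\sigma$''. For the inverse, given a tuple $(K_1,\ldots,K_m)$ satisfying (1)--(4), I would set $f=(\widehat{K_1},\ldots,\widehat{K_m})$ (isotone by (2) and in $\mathfrak{A}$ by (4)) and $\sigma=\{X_{p_i,k}:k\in K_i\setminus\{\widehat{K_i}\}\}$, then read (2) in the opposite direction to verify $\sigma\subseteq\Delta_f$. Under this bijection the multidegree $U_fX_\sigma$ matches the monomial $X_{p_1,K_1}\cdots X_{p_m,K_m}$.

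The step requiring the most care is the two-way translation between the inequalities defining $\Delta_f$ (particularly the subtle condition $f(p_i)\le k$ when $p_i<p_s$) and the max--min condition (2), together with the verification that $\widehat{K_i}=f(p_i)$ really recovers $f$ from the combinatorial data. Once this dictionary is in place, every remaining verification is routine bookkeeping, and the description of the minimal generating set of $\mathrm{HS}_t(\mathcal{L}(\mathfrak{A}))$ drops out.
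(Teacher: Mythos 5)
Your proposal is correct and follows essentially the same route as the paper: it invokes the Herzog--Takayama basis $(f,\sigma)$ with $\mathrm{set}(U_f)=\Delta_f$ established just before the theorem, and then sets up exactly the dictionary $K_i=\{f(p_i)\}\cup\{k: X_{p_i,k}\in\sigma\}$ (respectively $f=(\widehat{K_1},\ldots,\widehat{K_m})$, $\sigma=\{X_{p_i,k}:k\in K_i\setminus\{\widehat{K_i}\}\}$) used in the paper's proof, with the max--min condition (2) translating the isotonicity of $f$ together with the constraint $f(p_i)\le k$ in the definition of $\Delta_f$. The only difference is cosmetic: you make explicit the (correct) observation that all shifts in homological degree $t$ have common total degree $m+t$ and hence already form the minimal generating set, a point the paper leaves implicit.
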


\begin{proof} Denote $\mathcal{L}(\mathfrak{A})$ as $\mathcal{L}$. Let $U_fX_{\sigma}$ be a minimal generator of $\mathrm{HS}_t(\mathcal{L})$. Here, $f\in \mathfrak{A}$, $\sigma\subseteq \Delta_f$ and $|\sigma|=t$. For $i\in [m]$, we put $$K_i=\{a\in [n]\:\; X_{p_i, a}\in \sigma\}\cup\{f(p_i)\}.$$ Then
$$U_fX_{\sigma}=X_{p_1,K_1}X_{p_2,K_2}\cdots X_{p_m,K_m}.$$
It is enough to show that $\max (K_{\ell})\leq \min (K_j)$ if $p_{\ell}<p_j$. To check this, suppose $p_{\ell}<p_j$. Note that $f(p_\ell)=\max (K_{\ell})$. Moreover, if denote $k=\min (K_j)$, then either $X_{p_j,k}\in \Delta_f$ or $k=f(p_j)$, and it follows that $f(p_{\ell})\leq k$. This proves every minimal generator of $\mathrm{HS}_t(\mathcal{L})$ has the form described in the theorem.

Conversely, let's consider a monomial  $U=X_{p_1,K_1}X_{p_2,K_2}\cdots X_{p_m,K_m}$ that satisfies the four conditions given in the theorem. We define $f:P\rightarrow [n]$ by $f(p_i)=\max(K_i)$ for all $i\in [m]$. Then $f\in \mathfrak{A}$.  Let $$\sigma=\{X_{p_i,k}\:\; 1\leq i\leq m, k\in K_i\setminus\{f(p_i)\}\}.$$
Then $\sigma\subseteq \Delta_f$ and $|\sigma|=t$. It follows that $U=U_fX_{\sigma}$ and it is a minimal generator of $\mathrm{HS}_t(\mathcal{L})$. This completes the proof.
\end{proof}

It was proved in \cite{FH} that if $I$ has linear quotients, then so does  $\mathrm{HS}_1(I)$. Furthermore, in \cite{CF}, it was shown that $\mathrm{HS}_t(\mathcal{L}(P,2))$ has linear quotients for all $t\geq 1$.  However, the following result demonstrates that these results cannot be improved.

  Let $I$ be a monomial ideal generated in a single degree and $G(I)$ the minimal set of monomial generators of $I$. Recall from \cite{L} that $I$ is {\it  quasi-linear} if for each $u\in G(I)$, the colon ideal $I\setminus_u:u$ is generated by variables. Here, $I\setminus_u$ is the ideal generated by monomials in $G(I)\setminus \{u\}.$  It was shown in \cite[Theorem 3.3]{L} that if $I$ has a linear resolution then $I$ is quasi-linear.

\begin{Proposition} Let $P=[m]$, i.e., $P$ is a chain of length $m-1$. If $m\geq 3$ and $n\geq 3$, then $\mathrm{HS}_2(\mathcal{L}(P,n))$ does not admit a linear resolution.
\end{Proposition}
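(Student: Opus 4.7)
The plan is to apply the quasi-linearity criterion from \cite{L}: if $\mathrm{HS}_2(\mathcal{L}(P,n))$ had a linear resolution, then for every minimal generator $u$, the colon ideal $(I\setminus_u):u$ would have to be generated by variables. I will exhibit a generator $u$ together with a cubic monomial $w$ lying in this colon such that none of the three variables dividing $w$ belongs to the colon. Since any variable-generating set for $(I\setminus_u):u$ would have to contain a variable dividing $w$, this contradicts quasi-linearity and rules out a linear resolution.

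Using the combinatorial description of the generators supplied by Theorem~\ref{generator} (applied to the chain $P=[m]$ with $\mathcal{A}_p=[n]$), take
\[
u \;=\; X_{p_1,1}X_{p_1,2}X_{p_1,3}\cdot X_{p_2,3}\cdot X_{p_3,3}\cdots X_{p_m,3},
\]
which corresponds to $K_1=\{1,2,3\}$ and $K_j=\{3\}$ for $j\geq 2$, and
\[
v \;=\; X_{p_1,1}\cdot X_{p_2,1}\cdot X_{p_3,1}X_{p_3,2}X_{p_3,3}\cdot X_{p_4,3}\cdots X_{p_m,3},
\]
corresponding to $K_1=K_2=\{1\}$, $K_3=\{1,2,3\}$ and $K_j=\{3\}$ for $j\geq 4$. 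Both satisfy the four conditions in Theorem~\ref{generator}, with total size $m+2$, so $u,v\in G(\mathrm{HS}_2(\mathcal{L}(P,n)))$. A direct computation gives $w := v/\gcd(u,v) = X_{p_2,1}X_{p_3,1}X_{p_3,2}$, and because $v$ divides $wu$, clearly $w\in (I\setminus_u):u$.

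The heart of the argument is the case analysis showing that none of $X_{p_2,1}$, $X_{p_3,1}$, $X_{p_3,2}$ lies in $(I\setminus_u):u$; equivalently, for each such variable $X$, no generator $u'\neq u$ divides $Xu$. For a candidate $u'$, the $p_j$-variables appearing in $Xu$ are extremely constrained: for $j\geq 4$ only $X_{p_j,3}$ is available, forcing $K_j'=\{3\}$; at $p_1$ only $\{X_{p_1,1},X_{p_1,2},X_{p_1,3}\}$ appear; and at $p_2,p_3$ the admissible variables are determined by $X$. The chain condition $\max(K_i')\leq\min(K_{i+1}')$ then propagates across $i=1,2,3$ and, combined with the degree count $|K_1'|+\cdots+|K_m'|=m+2$, in each case either forces $K_1'=\{1,2,3\}$ and $K_j'=\{3\}$ for $j\geq 2$ (so $u'=u$) or produces a total size strictly less than $m+2$. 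The main obstacle is simply the bookkeeping of these three cases (one per choice of $X$), each subdivided by the possibilities for $K_3'$; every branch is closed off quickly by the chain inequalities and the size identity.

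Combining the two pieces yields a cubic minimal generator of $(I\setminus_u):u$ that is not a variable, hence $\mathrm{HS}_2(\mathcal{L}(P,n))$ is not quasi-linear and therefore, by \cite{L}, does not admit a linear resolution.
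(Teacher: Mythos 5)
Your proposal is correct and follows essentially the same route as the paper: the same two generators $\alpha=X_{p_1,1}X_{p_1,2}X_{p_1,3}X_{p_2,3}\prod_{j\ge3}X_{p_j,3}$ and $\beta=X_{p_1,1}X_{p_2,1}X_{p_3,1}X_{p_3,2}\prod_{j\ge3}X_{p_j,3}$, the same use of Theorem~\ref{generator}, and the same quasi-linearity obstruction from \cite{L} via the colon element $X_{p_2,1}X_{p_3,1}X_{p_3,2}$. Your case analysis showing that no variable dividing this element lies in the colon is the step the paper dismisses as ``easy to see,'' and your sketch of it (chain inequalities plus the size count $m+2$) is accurate.
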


\begin{proof} For monomials $\mathbf{a}$ and $\mathbf{b}$,  we define $\mathbf{b} / \mathbf{a}$ as the monomial obtained by dividing $\mathbf{b}$ by their greatest common divisor $(\mathbf{a},\mathbf{b})$. In view of \cite[Theorem 3.3]{L}, we only need  to show that $\mathrm{HS}_2(\mathcal{L}(P,n))$ is not quasi-linear.
 Put $$\alpha:=X_{1,1}X_{1,2}X_{1,3}X_{2,3}\prod_{i=3}^mX_{i,3} \mbox{\quad and \quad } \beta:=X_{1,1}X_{2,1}X_{3,1}X_{3,2}\prod_{i=3}^mX_{i,3}.$$ By Theorem~\ref{generator}, $\alpha$ and $\beta$ are minimal generators of $\mathrm{HS}_2(\mathcal{L}(P,n))$. Note that, applying our definition, $\beta / \alpha = X_{2,1}X_{3,1}X_{3,2}$.
Assume, for contradiction, that $\mathrm{HS}_2(\mathcal{L}(P,n))$ is quasi-linear. Then there should exist a minimal generator of $\mathrm{HS}_2(\mathcal{L}(P,n))$, say $\gamma$, such that $\gamma / \alpha$ is a variable belonging to $\{X_{2,1}, X_{3,1}, X_{3,2}\}$. However, it is straightforward to verify that no such $\gamma$ exists. Therefore, our assumption that $\mathrm{HS}_2(\mathcal{L}(P,n))$ is quasi-linear must be false, and hence the result follows.
 \end{proof}

\section{The Resolution}

In this section, we devote to proving the main result of this paper.

We will maintain  Conventions~\ref{setup}. Suppose $|A_1|\geq 2$ and let $a_1$ be the minimal element of $A_1$. Then we have the disjoint union of posets: $$\mathrm{Hom}(P, A_1,\ldots,A_m)=\mathrm{Hom}(P,\{a_1\},\ldots,A_m)\cup \mathrm{Hom}(P,A_1\setminus \{a_1\},\ldots,A_m).$$ Hence, if $\mathfrak{A}$ is a poset ideal of $\mathrm{Hom}(P, A_1,\ldots,A_m)$, then we have the disjoint union $$\mathfrak{A}=\{\mathrm{Hom}(P,\{a_1\},\ldots,A_m)\cap \mathfrak{A}\}\cup \{\mathrm{Hom}(P,A_1\setminus \{a_1\},\ldots,A_m)\cap \mathfrak{A}\}.$$
Denote $\mathfrak{A}_1=\mathrm{Hom}(P,\{a_1\},\ldots,A_m)\cap \mathfrak{A}$ and $\mathfrak{A}_2=\mathrm{Hom}(P, A_1\setminus\{a_1\},\ldots,A_m)\cap \mathfrak{A}$. Then, it is routine to check that $\mathfrak{A}_1$ and $\mathfrak{A}_2$ are poset ideals of $\mathrm{Hom}(P,\{a_1\},\ldots,A_m)$ and $\mathrm{Hom}(P,A_1\setminus\{a_1\},\ldots,A_m)$ respectively. Put $$\mathcal{L}=\mathcal{L}(\mathfrak{A}), \quad \mathcal{J}=\mathcal{L}(\mathfrak{A}_1) \quad \mbox{ and } \quad \mathcal{K}=\mathcal{L}(\mathfrak{A}_2).$$

 \begin{Lemma} \label{Betti} Under the notions as given above, we have $\mathcal{L=J+K}$ is a Betti-splitting. Moreover, $\mathcal{J\cap K}=X_{p_1,a_1}\mathcal{K}.$
\end{Lemma}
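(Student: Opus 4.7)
The lemma has two assertions: first, that the decomposition $\mathcal{L} = \mathcal{J} + \mathcal{K}$ is a Betti-splitting; second, that $\mathcal{J} \cap \mathcal{K}$ equals $X_{p_1, a_1} \mathcal{K}$. The plan is to dispatch the Betti-splitting quickly using the tools already at hand, and then to prove the two inclusions of the intersection identity separately.

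For the Betti-splitting, the set of minimal generators $G(\mathcal{L})$ partitions as $G(\mathcal{J}) \sqcup G(\mathcal{K})$ according to whether $f(p_1) = a_1$ or $f(p_1) \in A_1 \setminus \{a_1\}$ (and distinct $f$ give distinct $U_f$, all of the same total degree $m$). Since $\mathfrak{A}_1$ and $\mathfrak{A}_2$ are poset ideals of the appropriate $\mathrm{Hom}$ posets, as noted just above the lemma, both $\mathcal{J}$ and $\mathcal{K}$ are themselves generalized co-letterplace ideals. Lemma \ref{weak} then shows each is weakly polymatroidal and hence has a linear resolution, so Lemma \ref{component} delivers the Betti-splitting at once.

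For the inclusion $X_{p_1, a_1} \mathcal{K} \subseteq \mathcal{J} \cap \mathcal{K}$, I would take a generator $U_g$ of $\mathcal{K}$ (so $g \in \mathfrak{A}_2$) and build the map $h : P \to [n]$ defined by $h(p_1) = a_1$ and $h(p_i) = g(p_i)$ for $i \geq 2$. Using the labelling convention $p_i < p_j \Rightarrow i < j$ together with $a_1 = \min A_1 \leq g(p_1)$, I would verify that $h$ is isotone and that $h(p_i) \in A_i$ for every $i$. Since $h \leq g$ pointwise, the poset-ideal property of $\mathfrak{A}$ forces $h \in \mathfrak{A}$, and then $h \in \mathfrak{A}_1$ since $h(p_1) = a_1$. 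Hence $U_h \in \mathcal{J}$, and the identity $X_{p_1, a_1} U_g = X_{p_1, g(p_1)} U_h$ exhibits $X_{p_1, a_1} U_g$ as an element of $\mathcal{J}$, while membership in $\mathcal{K}$ is automatic.

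For the reverse inclusion $\mathcal{J} \cap \mathcal{K} \subseteq X_{p_1, a_1} \mathcal{K}$, given a monomial $u$ in the intersection, the fact that every generator of $\mathcal{J}$ is divisible by $X_{p_1, a_1}$ shows $X_{p_1, a_1} \mid u$. On the other hand $u$ is a multiple of some $U_g$ with $g \in \mathfrak{A}_2$, and $X_{p_1, a_1}$ does not appear in $U_g$ because $g(p_1) \neq a_1$; comparing exponents of $X_{p_1, a_1}$ then forces $X_{p_1, a_1}$ to divide the cofactor, so $u / X_{p_1, a_1}$ still lies in $\mathcal{K}$. The only step requiring real attention is the isotonicity check for $h$, and it hinges entirely on the choice $a_1 = \min A_1$; everything else is bookkeeping.
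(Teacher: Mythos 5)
Your proposal is correct and follows essentially the same route as the paper: the Betti-splitting via Lemma~\ref{weak} (linear resolutions) and Lemma~\ref{component}, and the inclusion $X_{p_1,a_1}\mathcal{K}\subseteq\mathcal{J}\cap\mathcal{K}$ via the modified map $h$ with $h(p_1)=a_1$, which you check more carefully than the paper does. The only difference is in the reverse inclusion, where the paper observes that $X_{p_1,a_1}\beta$ divides the lcm $\alpha\vee\beta$ of generators of $\mathcal{J}$ and $\mathcal{K}$, while you compare exponents of $X_{p_1,a_1}$ on an arbitrary monomial of the intersection; both are valid and essentially equivalent.
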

\begin{proof} Since $\mathfrak{A}$ is the disjoint union of $\mathfrak{A}_1$ and $\mathfrak{A}_2$, the minimal generating set of $\mathcal{L}$ is the disjoint union of minimal generating sets of $\mathcal{J}$ and $\mathcal{K}$. According to Lemma~\ref{weak},  both $\mathcal{J}$ and $\mathcal{K}$ have an $m$-linear resolution.   It follows from Lemma~\ref{component} that $\mathcal{L=J+K}$ is a Betti-splitting. We now show that $\mathcal{J\cap K}=X_{p_1,a_1}\mathcal{K}.$ Let $X_{p_1,b_1}X_{p_2,b_2}\cdots X_{p_m,b_m}$ be a minimal generator of $\mathcal{K}$. Then, in view of $(b_1,b_2,\ldots,b_m)\in \mathfrak{A}$, we have $(a_1,b_2,\ldots,b_m)\in \mathfrak{A}$. It follow that $X_{p_1,a_1}X_{p_2,b_2}\cdots X_{p_m,b_m}$ is a minimal generator of $\mathcal{J}$. This implies $X_{p_1,a_1}(X_{p_1,b_1}X_{p_2,b_2}\cdots X_{p_m,b_m})\in \mathcal{J\cap K}$ and so $X_{p_1,a_1}\mathcal{K}\subseteq \mathcal{J\cap K}.$

Conversely, let $\alpha=X_{p_1,a_1}\cdots X_{p_m,a_m}$ and $\beta=X_{p_1,b_1}\cdots X_{p_m,b_m}$ be minimal generators of $\mathcal{J}$ and $\mathcal{K}$ respectively. Since $a_1<b_1$,  $X_{p_1,a_1}\beta$ divides the least common multiple $[\alpha, \beta]$ of $\alpha$ and $\beta$. Thus, $[\alpha, \beta]\in X_{p_1,a_1}\mathcal{K}$, and $\mathcal{J\cap K}\subseteq X_{p_1,a_1}\mathcal{K}$, as required.
\end{proof}

We remark that $\mathcal{L=J+K}$ is also a Eliahou-Kervaire splitting,  as introduced in \cite{EK}.
Recall that a map $f:P\rightarrow [n]$ can also be denoted by the $m$-tuple $(f(1),\ldots, f(m))$.
\begin{Construction}\label{con} \em   We construct a complex of multigraded  $R$-modules \begin{equation}\label{resolution11}\mathbb{F}: \xymatrix{0\ar[r] &F_p\ar[r]^{d_p}&F_{p-1}\ar[r]^{d_{p-1}}&\cdots\ar[r]^{d_1}&F_0\ar[r]^{d_0}&\mathcal{L}(\mathfrak{A})\ar[r]&0}
\end{equation} as follows:
\begin{itemize}
\item for $0\leq t\leq p$, the free module $F_t$ has  a basis consisting of   symbols  $[K_1,\ldots, K_m]$ with multidegree $X_{p_1,K_1}\cdots X_{p_m,K_m}$, subject to the following conditions: \begin{enumerate}
                                                              \item $\emptyset \neq K_i\subseteq A_i$ for all $i=1,\ldots,m$;
                                                               \item $|K_1|+\cdots+|K_m|=m+t$;
                                                              \item $\max(K_i)\leq \min(K_j)$ if $p_i<p_j$;
                                                              \item $(\widehat{K_1}, \widehat{K_2}, \ldots, \widehat{K_m})\in \mathfrak{A}$.
                                                            \end{enumerate}
\emph{We will denote $(K_1,\ldots, K_m)\in C_t(\mathfrak{A})$ to indicate that the sets $K_1, \ldots, K_m$ satisfy these four conditions.}

The differential maps $d_t$ are given as follows:

    \item If $(K_1,\cdots,K_m)\in C_0(\mathfrak{A})$, i.e., $[K_1,\cdots,K_m]$ is a base vector of $F_0$,   then $$d_0([K_1,\cdots,K_m])=X_{p_1,K_1}X_{p_2,K_2}\ldots X_{p_m,K_m}\in \mathcal{L}(\mathfrak{A}).$$

\item Given $t\geq 1$ and $(K_1,\cdots,K_m)\in C_t(\mathfrak{A})$, we denote $$\Omega=\{X_{p_i,a}\mid 1\leq i\leq m, |K_i|\geq 2, a\in K_i\}$$ and  $$\overline{\Omega}=\{X_{p_i,a}\mid 1\leq i\leq m,  a\in K_i\}.$$
Note that $|\Omega|=t+|\{i\mid |K_i|\geq 2\}|$. Define an order on $\overline{\Omega}$: $X_{p_i,a}>X_{p_j,b}$ if and only if either $i<j$ or $i=j$ and $a<b$. Let $\sigma(X_{p_i,a},\overline{\Omega})$ be the number of elements in $\overline{\Omega}$ that are larger than $X_{p_i,a}$. Then $$d_t([K_1,\ldots,K_m])=\sum_{X_{p_i,a}\in \Omega}(-1)^{\sigma(X_{p_i,a},\overline{\Omega})}X_{p_i,a}[K_1, \ldots, K_i\setminus\{a\},\ldots, K_m].$$
\end{itemize}

\end{Construction}

\begin{Lemma} \label{complex} The sequence defined above is a complex, i.e., $d_{t-1}d_t=0$ for all $t\geq 1$.
\end{Lemma}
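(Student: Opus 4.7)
The plan is to prove $d_{t-1}d_t([K_1,\ldots,K_m])=0$ by expanding the composition as a double sum over ordered pairs of admissible removals and showing the two orderings of each unordered pair cancel in sign, while ``forbidden'' pairs simply do not appear.

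First, I would fix a base element $[K_1,\ldots,K_m]\in C_t(\mathfrak{A})$, write
\[
d_{t-1}d_t([K_1,\ldots,K_m])=\sum_{X\in\Omega}\ \sum_{Y\in\Omega_X}\ (-1)^{\sigma(X,\overline{\Omega})+\sigma(Y,\overline{\Omega}_X)}\,XY\,[K_1,\ldots,K_i\setminus\{a\},\ldots,K_j\setminus\{b\},\ldots,K_m],
\]
where $X=X_{p_i,a}$, $Y=X_{p_j,b}$, and $\overline{\Omega}_X$, $\Omega_X$ denote the data of the intermediate symbol. Before analyzing signs, I would verify that every intermediate symbol $[\ldots,K_i\setminus\{a\},\ldots]$ lies in $C_{t-1}(\mathfrak{A})$: conditions (1) and (2) are immediate, (3) is preserved because replacing $K_i$ by a subset can only shrink its max or grow its min, and (4) holds because the tuple of maxes of the new symbol is componentwise at most $(\widehat{K_1},\ldots,\widehat{K_m})\in\mathfrak{A}$ and $\mathfrak{A}$ is a poset ideal.

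Then I would classify the ordered pairs $(X,Y)=((i,a),(j,b))$ into three cases. Case 1: $i\neq j$; then removing $a$ from $K_i$ leaves $|K_j|$ untouched, so $Y\in\Omega_X$ iff $Y\in\Omega$, and by symmetry the reversed order $(Y,X)$ is also admissible and produces the same final symbol. Case 2: $i=j$ with $|K_i|\geq 3$; the removal of either element still leaves at least two elements in $K_i$, so both orderings are admissible. Case 3: $i=j$ with $|K_i|=2$; after removing $a$, the other element of $K_i$ is no longer in $\Omega_X$, so this pair never contributes (and by the same token, neither ordering is admissible, so nothing has to cancel).

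For Cases 1 and 2 I would close with the standard Koszul sign check: assuming without loss of generality $X>Y$ in the given order on $\overline{\Omega}$, the element $X$ was counted in $\sigma(Y,\overline{\Omega})$ but not vice-versa, so
\[
\sigma(Y,\overline{\Omega}_X)=\sigma(Y,\overline{\Omega})-1,\qquad \sigma(X,\overline{\Omega}_Y)=\sigma(X,\overline{\Omega}).
\]
Therefore the $(X,Y)$ and $(Y,X)$ summands carry opposite signs and cancel, giving $d_{t-1}d_t=0$. The only subtle point—and the step I would write out most carefully—is Case 3, where I need to be sure that an admissible first removal can genuinely disqualify its partner as a second removal, so that the usual Koszul cancellation is not broken by ``orphaned'' terms; the resolution is that both orderings of such a pair are disallowed simultaneously, so no orphans arise.
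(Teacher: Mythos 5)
Your argument is correct and is essentially the paper's own proof: the paper likewise cancels contributions in pairs, using exactly the fact that the sign is taken with respect to $\overline{\Omega}$ (so the sign data of the intermediate symbol is just $\overline{\Omega}\setminus\{X_{p_i,a}\}$, giving $\sigma(Y,\overline{\Omega}_X)=\sigma(Y,\overline{\Omega})-1$ when $X>Y$), and it implicitly handles your Case 3 by listing only resulting symbols with $i<j$ or with $|K_i|\geq 3$; your explicit check that the intermediate symbols lie in $C_{t-1}(\mathfrak{A})$ is a welcome addition the paper leaves tacit (note that isotonicity of the new max-tuple, needed to invoke the poset-ideal property, follows from your verification of condition (3)). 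The one small point to add is that your double-sum expansion presupposes $t\geq 2$: for $t=1$ the map $d_0$ is not a removal-type differential but sends a basis element to its multidegree monomial, so $d_0d_1=0$ needs the separate one-line check (as at the start of the paper's proof) that the two surviving terms yield the same monomial with opposite signs.
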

\begin{proof} We first consider the case that $t=1$. Given a basis element $[K_1,\ldots,K_m]$ of $F_1$, there is exactly one $i\in [m]$ such that $|K_i|\geq 2$. Moreover, if $|K_i|\geq 2$, then $|K_i|=2$.  From this observation, it follows easily that  that $d_0d_1([K_1,\ldots,K_m])=0$.
Suppose now that $t>1$ and let  $[K_1,\ldots,K_m]$ be a basis element of $F_t$. It suffices to show that $d_{t-1}d_t([K_1,\ldots,K_m])=0$. Notice  that $d_{t-1}d_t([K_1,\ldots,K_m])$ can be written as an $R$-linear combination of symbols of the following forms: $$[K_1,\ldots,K_i\setminus\{a\},\ldots, K_j\setminus\{b\},\ldots,K_m] \mbox{ for } X_{p_i,a},  X_{p_j,b}\in \Omega \mbox{ with } i<j  $$ and symbols of the form  $$[K_1,\ldots,K_i\setminus\{a,b\},\ldots, K_m] \mbox{ for }  X_{p_i,a},  X_{p_i,b}\in \Omega \mbox{ with } |K_i|\geq 3 \mbox{ and } a<b.$$  Let $X_{p_i,a},  X_{p_j,b}\in \Omega \mbox{ with } i<j $. Then the symbol $[K_1,\ldots,K_i\setminus\{a\},\ldots, K_j\setminus\{b\},\ldots,K_m]$ appears twice in the expansion of $d_{t-1}d_t([K_1,\ldots,K_m])$, with  coefficients  of $$(-1)^{\sigma(X_{p_i,a}, \overline{\Omega})}(-1)^{\sigma(X_{p_j,b}, \overline{\Omega})-1}X_{p_i,a}X_{p_j,b}$$ and  $$(-1)^{\sigma(X_{p_i,a}, \overline{\Omega})}(-1)^{\sigma(X_{p_j,b}, \overline{\Omega})}X_{p_i,a}X_{p_j,b},$$ respectively. Therefore, these two terms cancel each other out in the expansion of $d_{t-1}d_t([K_1,\ldots,K_m])$.  A similar argument can be made for elements  $X_{p_i,a},  X_{p_i,b}\in \Omega$  with  $|K_i|\geq 3$ and $a<b$.  As a result, we conclude that $d_{t-1}d_t([K_1,\ldots,K_m])$ is equal to zero, as needed.
\end{proof}

   Analyzing the proof of Lemma~\ref{complex}, we can see why we should use $ (-1)^{\sigma(X_{p_i,a},\overline{\Omega})}$ instead of $ (-1)^{\sigma(X_{p_i,a}, \Omega)}$  in Construction~\ref{con}.

We  now move forward to prove the main result of this paper. Although the proof is quite lengthy, most parts of it are routine, except for a few steps such as the construction of the comparison  maps.

\begin{Theorem} \label{main4.4} The  sequence in Construction~\ref{con}
is the  minimal multigraded free resolution of $\mathcal{L}(\mathfrak{A})$.
\end{Theorem}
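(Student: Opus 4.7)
I would prove Theorem~\ref{main4.4} by induction on $\mathrm{size}(\mathcal{L}(\mathfrak{A}))$. In the base case every $|A_i|=1$, so $\mathfrak{A}$ contains at most one element, $\mathcal{L}(\mathfrak{A})$ is zero or principal, and the four conditions of Construction~\ref{con} allow only the single basis vector $[\{a_1\},\ldots,\{a_m\}]$ in $F_0$; the statement is then immediate. For the inductive step, I split on whether $|A_1|\geq 2$ or $|A_1|=1$.

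If $|A_1|=1$, every generator of $\mathcal{L}(\mathfrak{A})$ is divisible by $X_{p_1,a_1}$, and hence $\mathcal{L}(\mathfrak{A})=X_{p_1,a_1}\cdot\mathcal{L}(\tilde{\mathfrak{A}})$, where $\tilde{\mathfrak{A}}$ is the restriction of $\mathfrak{A}$ to $\tilde P=P\setminus\{p_1\}$, viewed as a poset ideal on the collection $\tilde{\mathcal{A}}$ defined by $\tilde A_j=A_j\cap[a_1,n]$ when $p_1<p_j$ and $\tilde A_j=A_j$ otherwise. Since $\mathrm{size}(\mathcal{L}(\tilde{\mathfrak{A}}))<\mathrm{size}(\mathcal{L}(\mathfrak{A}))$, the inductive hypothesis gives the proposed minimal free resolution of $\mathcal{L}(\tilde{\mathfrak{A}})$; after multiplying by $X_{p_1,a_1}$ and rescaling each basis element by $(-1)^t$ at homological degree $t$, this complex matches $\mathbb{F}$ via $[K_2,\ldots,K_m]\leftrightarrow[\{a_1\},K_2,\ldots,K_m]$, the global sign per degree compensating for the extra element $X_{p_1,a_1}$ that Construction~\ref{con} prepends to $\overline{\Omega}$.

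The main case is $|A_1|\geq 2$, where I apply the Betti-splitting $\mathcal{L}=\mathcal{J}+\mathcal{K}$ from Lemma~\ref{Betti} together with $\mathcal{J}\cap\mathcal{K}=X_{p_1,a_1}\mathcal{K}$. Both $\mathcal{J}$ and $\mathcal{K}$ have strictly smaller size, so by induction they admit the proposed minimal free resolutions $\mathbb{F}^{\mathcal{J}}$ and $\mathbb{F}^{\mathcal{K}}$; a resolution of $\mathcal{J}\cap\mathcal{K}$ is then $X_{p_1,a_1}\mathbb{F}^{\mathcal{K}}$. I then lift the map $\mathcal{J}\cap\mathcal{K}\to\mathcal{J}\oplus\mathcal{K}$, $x\mapsto(-x,x)$, to the explicit chain map (writing $[L_1,\ldots,L_m]^{(X)}$ for the basis vector of the shifted complex $X_{p_1,a_1}\mathbb{F}^{\mathcal{K}}$ corresponding to $[L_1,\ldots,L_m]\in\mathbb{F}^{\mathcal{K}}$)
\[
\varphi_t\bigl([L_1,\ldots,L_m]^{(X)}\bigr)=X_{p_1,a_1}[L_1,\ldots,L_m]-\epsilon_{L_1}X_{p_1,c_1}[\{a_1\},L_2,\ldots,L_m],
\]
where $\epsilon_{L_1}=1$ and $c_1$ denotes the unique element of $L_1$ when $|L_1|=1$, and $\epsilon_{L_1}=0$ when $|L_1|\geq 2$. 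By the Betti-splitting property the mapping cone $\mathbb{C}(\varphi)$ is then a minimal free resolution of $\mathcal{L}$, and I match it with $\mathbb{F}$ via the basis bijection $[\{a_1\}\cup L_1,L_2,\ldots,L_m]\leftrightarrow[L_1,\ldots,L_m]^{(X)}$ on the shifted $\mathcal{K}$-summand (for $\emptyset\neq L_1\subseteq A_1\setminus\{a_1\}$) together with the tautological identifications of $F^{\mathcal{J}}_t$ and $F^{\mathcal{K}}_t$ inside $F_t$.

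The main obstacle is sign-bookkeeping. First, I need to verify that $\varphi$ is a chain map, i.e., $d^{\mathcal{J}\oplus\mathcal{K}}\varphi_t=\varphi_{t-1}d^{X\mathcal{K}}$; the two branches of $\epsilon_{L_1}$ interact via a cancellation that is delicate exactly when $|L_1|=2$, because then a term of $d^{X\mathcal{K}}$ reduces $L_1$ to a singleton and thereby activates the $F^{\mathcal{J}}$-component of $\varphi_{t-1}$, which must match the contribution coming from $d^{\mathcal{J}}$ applied to the $F^{\mathcal{J}}$-component already present in $\varphi_t$. Second, matching the cone differential with the $d_t$ of Construction~\ref{con} on the shifted $\mathcal{K}$-summand relies on the observation that enlarging $K_1$ from $L_1$ to $\{a_1\}\cup L_1$ inserts $X_{p_1,a_1}$ as the new strict maximum of $\overline{\Omega}$, so every other $\sigma(X_{p_i,a},\overline{\Omega})$ increases by exactly $1$, producing simultaneously the sign $-1$ in front of $d^{X\mathcal{K}}$ prescribed by the mapping cone formula and the correct sign of the $\varphi_{t-1}$-contribution into $F^{\mathcal{J}}$. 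Minimality of the resulting resolution is automatic, since all entries of $\varphi$ and $d$ lie in the graded maximal ideal.
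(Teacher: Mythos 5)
Your proposal is correct and follows essentially the same route as the paper: induction on $\mathrm{size}(\mathcal{L}(\mathfrak{A}))$, the case $|A_1|=1$ handled by factoring out $X_{p_1,a_1}$ and passing to the smaller poset, and the case $|A_1|\geq 2$ handled via the Betti splitting of Lemma~\ref{Betti} and a mapping cone, where your lift $\varphi_t$ is exactly the paper's comparison map $f_t$ (the $|L_1|=1$ branch giving $-X_{p_1,c_1}[\{a_1\},K_2,\ldots,K_m]$ and zero otherwise). Your sign bookkeeping, based on $X_{p_1,a_1}$ becoming the strict maximum of $\overline{\Omega}$ so that every $\sigma$ shifts by one, is precisely the observation the paper uses in its Step 3 and in the $|A_1|=1$ case, and the remaining verifications you defer are the routine computations carried out there.
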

\begin{proof}  Denote $\mathcal{L}(\mathfrak{A})$ as $\mathcal{L}$ for short. We proceed by   induction on the size of $\mathcal{L}$.

 \noindent  {\bf Part 1: initial cases.} If $\mathrm{size}(\mathcal{L})=1$ there is nothing to prove. If $\mathrm{size}(\mathcal{L})=2$, then either $m=1$ and $|A_1|=2$ or $m=2$ and $|A_1|=|A_2|=1$. In the case that  $m=2$ and $|A_1|=|A_2|=1$, $\mathcal{L}$  is a free module of rank one and we are done. In the case that $m=1$ and $|A_1|=2$, we may write $\mathcal{L}=(X_1,X_2)$ and the sequence (\ref{resolution11}) specialized in this case is as follows:
\begin{equation*}\xymatrix{0\ar[r]& R([X_1X_2])\ar[r]^{d_1}& R([X_1])\oplus R([X_2])\ar[r]&\mathcal{L}\ar[r]&0}.\end{equation*} Here, $d_1([X_1X_2])=X_1[X_2]-X_2[X_1]$. One may check easily that it is indeed a multigraded minimal  free resolution of $(X_1,X_2)$.

\noindent  {\bf Part 2: Inductive step.}   Suppose now that $\mathrm{size}(\mathcal{L})\geq 3$. We consider the following two cases.

\noindent  {\bf Part 2.1.}   The first case is when $|A_1|=1$. Let $a_1$ be the unique element of $A_1$. In this case, we may write $$\mathcal{L}=X_{p_1,a_1}\mathcal{L}(P',A'_2,A'_3,\ldots,A'_m; \mathfrak{A}').$$ Here, $P'=\{p_2,\ldots,p_m\}$ is a sub-poset of $P$,   $\mathfrak{A}'=\{(b_2,\ldots,b_m)\mid (a_1,b_2,\ldots,b_m)\in \mathfrak{A}\}$, and for $i=2,\ldots,m$,  $$A'_i=\left\{
                                                                                           \begin{array}{ll}
                                                                                             A_i\cap [a_1,n], & \hbox{$p_1<p_i$;} \\
                                                                                             A_i, & \hbox{otherwise.}
                                                                                           \end{array}
                                                                                         \right.
$$ Note that $\mathfrak{A}'$ is a poset ideal of $\mathrm{Hom}(P', A'_2,A'_3,\ldots,A'_m)$. Denote $\mathcal{L}( \mathfrak{A}')$ as $\mathcal{L}'$. It is straightforward to observe that if  \begin{equation*} \mathbb{F}': \xymatrix{0\ar[r] &F'_p\ar[r]^{d'_p}&F'_{p-1}\ar[r]^{d'_{p-1}}&\cdots\ar[r]^{d'_1}&F'_0\ar[r]^{d'_0}&\mathcal{L'}\ar[r]&0}
\end{equation*}
is a  minimal multigraded  free resolution of $\mathcal{L'}$, then $\mathcal{L}$ admits a minimal multigraded  free resolution of the following form:
\begin{equation*} \mathbb{G}: \xymatrix{0\ar[r] &G_p\ar[r]^{\delta_p}&G_{p-1}\ar[r]^{\delta_{p-1}}&\cdots\ar[r]^{\delta_1}&G_0\ar[r]^{\delta_0}&\mathcal{L}\ar[r]&0,}
\end{equation*} which  is a shift of $\mathbb{F}'$ by $X_{p_1,a_1}$. Specifically, if $F'_i$ has a basis $f'_{i,1},\ldots,f'_{i,k_i}$, then $G_i$ has a corresponding basis $g_{i,1},\ldots,g_{i,k_i}$ where the multidegree of $g_{i,j}$ is the multidegree of $f'_{i,j}$ multiplied by $X_{p_1,a_1}$ for all $i=0,\ldots,p$ and $j=1,\ldots,k_i$. Moreover, the matrices representing  $d'_i$ and $\delta_i$ are identical  for all $i=0,\ldots,p$ with respect to these bases.
Since $\mathrm{size}(\mathcal{L}')\leq \mathrm{size}(\mathcal{L})-1$, by induction,  $\mathbb{F}'$ conforms to the structure  as described in Construction~\ref{con}. Therefore, $\mathbb{G}$ also  follows the same structure, as required.

\noindent  {\bf Part 2.2.}  The other case is when $|A_1|\geq 2$. Let $a_1$ be the minimal element of $A_1$. In this case, $\mathcal{L}$ has the Betti-splitting  as given in Lemma~\ref{Betti}. By induction,  we can assume  the minimal free resolutions of $\mathcal{J}$ and  $\mathcal{K}$ are  given in Construction~\ref{con}, which we denote by $\mathbb{F}^{\mathcal{J}}$ and $\mathbb{F}^{\mathcal{K}}$ respectively. Since $\mathcal{J}\cap \mathcal{K}=X_{p_1,a_1}\mathcal{K}$, the minimal free resolution of $\mathcal{J\cap K}$, denoted by $\mathbb{F}^{\mathcal{J\cap K}} $ is the same as $\mathbb{F}^{\mathcal{K}}$ except a shift by $X_{p_1,a_1}$. Let $\mathbb{F}^{\mathcal{L}}$ be the resolution obtained by applying mapping cone to the Betti-splitting $\mathcal{L}=\mathcal{J}+\mathcal{K}$. Then $\mathbb{F}^{\mathcal{L}}$ is the minimal multigraded free resolution of $\mathcal{L}$. It remains to show that $\mathbb{F}^{\mathcal{L}}$  can be identified with $\mathbb{F}$. We achieve this goal in the following steps.

Step 1: We show that the free modules in the resolution  $\mathbb{F}^{\mathcal{L}}$ have the bases described  in Construction~\ref{con}. By Subsection~\ref{mapping}, we have $F^{\mathcal{L}}_t=F^{\mathcal{J\cap K}}_{t-1}\oplus F^{\mathcal{J}}_t\oplus F^{\mathcal{K}}_t.$
The basis of $F^{\mathcal{J}}_t$ and  $F^{\mathcal{K}}_t$  are the following sets, respectively:
\begin{equation*}\begin{split}
&\mathfrak{B}_1:=\{[K_1,\ldots, K_m]\mid K_1=\{a_1\},  (K_1, \ldots, K_m)\in C_t(\mathfrak{A})\},\\
&\mathfrak{B}_2:=\{[K_1,\ldots, K_m]\mid  a_1\notin K_1,  (K_1, \ldots, K_m)\in C_t(\mathfrak{A})  \}.
\end{split}
\end{equation*}
The basis of $F^{\mathcal{J\cap K}}_{t-1}$ is a shift of basis of $F^{\mathcal{K}}_{t-1}$ by $X_{p_1,a_1}$, so it is
\begin{equation*}\begin{split}\mathfrak{B}_3:=\{X_{p_1,a_1}[K_1,\ldots, K_m]\mid  a_1\notin K_1,  (K_1, \ldots, K_m)\in C_{t-1}(\mathfrak{A})\},
\end{split}
\end{equation*}
By identifying  $X_{p_1,a_1}[K_1,\ldots, K_m]$ with $[K_1\cup\{a_1\}, K_2, \ldots, K_m]$, we may also write
$$\mathfrak{B}_3:=\{[K_1,\ldots, K_m]\mid  \{a_1\}\subsetneqq K_1,  (K_1, \ldots, K_m)\in C_{t}(\mathfrak{A})\}$$
On the other hand,  $F_t$ has a basis
$$\mathfrak{C}:=\{[K_1,\ldots, K_m]\mid   (K_1, \ldots, K_m)\in C_{t}(\mathfrak{A})\}$$
Therefore, it is easy to see that $\mathfrak{B}_1, \mathfrak{B}_2, \mathfrak{B}_3$ form a partition of $\mathfrak{C}$, implying $F_t^{\mathcal{L}}$ and $F_t$ share the same basis.

Step 2:  Let $f_{-1}:u\in \mathcal{J\cap K}\mapsto(-u,u)\in \mathcal{J\oplus K}$. In order to apply the mapping cone, we need to construct the comparison maps $f_t, t=0,1,\ldots$ such that the following diagram
	$$
	\xymatrix{
		\displaystyle
		\mathbb{F}^{\mathcal{J\cap K}}:\cdots\ar[r] &F_2^{\mathcal{J\cap K}} \ar[d]_{f_2}\ar[r]^{d_2^{\mathcal{J\cap K}}} & F_1^{\mathcal{J\cap K}} \ar[d]_{f_1}\ar[r]^{d_1^{\mathcal{J\cap K}}} & F_0^{\mathcal{J\cap K}}\ar[d]_{f_0}\ar[r]^{d_0^{\mathcal{J\cap K}}}& \mathcal{J\cap K}\ar[d]^{f_{-1}}\ar[r]&0\\
		\mathbb{F}^{\mathcal{J}}\oplus \mathbb{F}^{\mathcal{K}}:\cdots\ar[r] &F_2^{\mathcal{J}}\oplus F_2^{\mathcal{K}} \ar[r]_{d_2^{\mathcal{J}}\oplus d_2^{\mathcal{K}}} & F_1^{\mathcal{J}}\oplus F_1^{\mathcal{K}} \ar[r]_{d_1^{\mathcal{J}}\oplus d_1^{\mathcal{K}}} & F_0^{\mathcal{J}}\oplus F_0^{\mathcal{K}}\ar[r]_{d_0^{\mathcal{J}}\oplus d_0^{\mathcal{K}}}&\mathcal{J}\oplus \mathcal{K}\ar[r]&0
	}
	$$
commutative. Let $t\geq 0$ and let
 $[K_1,\ldots,K_m]$ be a basis element  of $F^{\mathcal{J\cap K}}_t$. Then $|K_1|+\cdots+|K_m|=m+t+1$, $a_1\in K_1$ and $K_1\setminus\{a_1\}\neq \emptyset.$ It should be noted that $$d_t^{\mathcal{J\cap K}}([K_1,\ldots,K_m])=X_{p_1,a_1}d_t^{\mathcal{K}}([K_1\setminus \{a_1\},K_2,\ldots,K_m]).$$
Now, we define $f_t$ by $f_t([K_1,\ldots,K_m])=(f_{t,\mathcal{J}}([K_1,\ldots,K_m]), f_{t,\mathcal{K}}([K_1,\ldots,K_m]))$, where
$$f_{t,\mathcal{J}}([K_1,\ldots,K_m])=\left\{
                                            \begin{array}{ll}
                                             -X_{p_1,a_2}[\{a_1\},K_2,\ldots,K_m], & \hbox{$|K_1|=2, K_1=\{a_1,a_2\}$;} \\
                                              0, & \hbox{$|K_1|\geq 3$.}
                                            \end{array}
                                          \right.\in \mathbb{F}^{\mathcal{J}}_t,$$
                                          and
$$f_{t,\mathcal{K}}([K_1,\ldots,K_m])=X_{p_1,a_1}[K_1\setminus \{a_1\},K_2,\ldots,K_m]\in \mathbb{F}^{\mathcal{K}}_t.$$
  We claim that $(d_t^{\mathcal{J}}\oplus d_t^{\mathcal{K}})f_t([K_1,\ldots,K_m])=f_{t-1}d_t^{\mathcal{J\cap K}}([K_1,\ldots,K_m])$.  If we can prove this claim, then  $f_t, t=0,1,\ldots$ are comparison maps. We establish  this claim in the following two cases.

Case 1: $|K_1|=2$ and $K_1=\{a_1,a_2\}$.   In this case, \begin{equation*}\begin{split} (d_t^{\mathcal{J}}\oplus d_t^{\mathcal{K}})f_t([K_1,\ldots,K_m])=(-X_{p_1,a_2}d_t^{\mathcal{J}}([\{a_1\},\ldots,K_m]), X_{p_1,a_1}d_t^{\mathcal{K}}([\{a_2\}, K_2,\ldots,K_m])).
\end{split}\end{equation*}
Since \begin{equation*}\begin{split} f_{t-1,\mathcal{K}}d_t^{\mathcal{J\cap K}}([K_1,\ldots,K_m])&=f_{t-1,\mathcal{K}}(X_{p_1,a_1}d_t^{\mathcal{K}}([\{a_2\}, K_2,\ldots,K_m]))\\&=X_{p_1,a_1}d_t^{\mathcal{K}}([\{a_2\}, K_2,\ldots,K_m],\end{split}\end{equation*}
 it suffices to show that $$f_{t-1,\mathcal{J}}(X_{p_1,a_1}d_t^{\mathcal{K}}([\{a_2\},K_2,\ldots,K_m]))=-X_{p_1,a_2}d_t^{\mathcal{J}}([\{a_1\},\ldots,K_m]).$$
 Let $\Omega_1=\{X_{p_i,a}\mid 2\leq i\leq m, |K_i|\geq 2, a\in K_i\}$ and $\overline{\Omega_1}=\{X_{p_i,a}\mid 2\leq i\leq m,  a\in K_i\}$. Then
 \begin{equation*}\begin{split}&f_{t-1,\mathcal{J}}(X_{p_1,a_1}d_t^{\mathcal{K}}([\{a_2\},K_2,\ldots,K_m]))
 \\& =f_{t-1,\mathcal{J}}(X_{p_1,a_1}\sum_{X_{p_i,a}\in \Omega_1}(-1)^{\sigma(X_{p_i,a},\overline{\Omega_1})+1}X_{p_i,a}[\{a_2\},K_2,\ldots,K_i\setminus\{a\},\ldots,K_m])
 \\& =f_{t-1,\mathcal{J}}(\sum_{X_{p_i,a}\in \Omega_1}(-1)^{\sigma(X_{p_i,a},\overline{\Omega_1})+1}X_{p_i,a}[\{a_1,a_2\},K_2,\ldots,K_i\setminus\{a\},\ldots,K_m])
 \\&=-X_{p_1,a_2}\sum_{X_{p_i,a}\in \Omega_1}(-1)^{\sigma(X_{p_i,a},\overline{\Omega_1})+1}X_{p_i,a}[\{a_1\},K_2,\ldots,K_i\setminus\{a\},\ldots,K_m])
 \\&=-X_{p_1,a_2}d_t^{\mathcal{J}}([\{a_1\},\ldots,K_m]).
 \end{split}\end{equation*}

 Case 2: $|K_1|\geq 3$.  In this case, \begin{equation*}\begin{split}  (d_t^{\mathcal{J}}\oplus d_t^{\mathcal{K}})f_t([K_1,\ldots,K_m])=(0,X_{p_1,a_1}d_t^{\mathcal{K}}([K_1\setminus \{a_1\}, K_2,\ldots,K_m]))
\end{split}\end{equation*}
 and $$f_{t-1}d_t^{\mathcal{J\cap K}}([K_1,\ldots,K_m])=f_{t-1}(X_{p_1,a_1}d_t^{\mathcal{K}}([K_1\setminus\{a_1\}, K_2,\ldots,K_m])).$$
 Since $f_{t-1,\mathcal{K}}(X_{p_1,a_1}d_t^{\mathcal{K}}([K_1\setminus\{a_1\}, K_2,\ldots,K_m]))=X_{p_1,a_1}d_t^{\mathcal{K}}([K_1\setminus \{a_1\}, K_2,\ldots,K_m]$, it suffices to prove $f_{t-1,\mathcal{J}}(X_{p_1,a_1}d_t^{\mathcal{K}}([K_1\setminus\{a_1\}, K_2,\ldots,K_m]))=0$. If $|K_1|\geq 4$, then we are done by the definition of $f_{t-1,\mathcal{J}}$ as well as the the fact that each term in the expansion
of $d_t^{\mathcal{K}}([K_1\setminus\{a_1\}, K_2,\ldots,K_m]))$ has the form $u[K'_1,\ldots, K'_m]$ such that $|K'_1|\geq 2.$ Here $u$ is a monomial.
 Suppose now that $|K_1|=3$ and $K_1=\{a_1,a_2,a_3\}$ with $a_1<a_2<a_3$. Put $\Omega_2=\{X_{p_i,a}\mid 2\leq i\leq m, |K_i|\geq 2, a\in K_i\}$ and $\overline{\Omega_2}=\{X_{p_i,a}\mid 2\leq i\leq m,  a\in K_i\}$. Then
  \begin{equation*}\begin{split}&f_{t-1,\mathcal{J}}(X_{p_1,a_1}d_t^{\mathcal{K}}([\{a_2,a_3\},K_2,\ldots,K_m]))\\
  &=f_{t-1,\mathcal{J}}(X_{p_1,a_1}(X_{p_1,a_2}[\{a_3\},K_2,\ldots,K_m]-X_{p_1,a_3}[\{a_2\},K_2,\ldots,K_m]))\\
  &+f_{t-1,\mathcal{J}}(X_{p_1,a_1}\sum_{X_{p_i,a}\in \Omega_2}(-1)^{\sigma(X_{p_i,a},\overline{\Omega_2})}X_{p_i,a}[\{a_2,a_3\},K_2,\ldots,K_i\setminus\{a\},\ldots,K_m]).
 \end{split}\end{equation*}
 Note that $f_{t-1,\mathcal{J}}(X_{p_1,a_1}(X_{p_1,a_2}[\{a_3\},K_2,\ldots,K_m])=X_{p_1,a_2}X_{p_1,a_3}[\{a_1\},K_2,\ldots,K_m]$
 and  $f_{t-1,\mathcal{J}}(X_{p_1,a_1}(X_{p_1,a_3}[\{a_2\},K_2,\ldots,K_m])=X_{p_1,a_2}X_{p_1,a_3}[\{a_1\},K_2,\ldots,K_m]$, it follows that $$f_{t-1,\mathcal{J}}(X_{p_1,a_1}(X_{p_1,a_2}[\{a_3\},K_2,\ldots,K_m]-X_{p_1,a_3}[\{a_2\},K_2,\ldots,K_m]))=0.$$
  It is evident  that $f_{t-1,\mathcal{J}}([\{a_1,a_2,a_3\},K_2,\ldots,K_i\setminus\{a\},\ldots,K_m])=0$ for all $X_{p_i,a}\in \Omega_2$. Thus, the claim has been proved.

Step 3: It remains to show that $d^{\mathcal{L}}_t$, the $t$-th differential of $\mathbb{F}^{\mathcal{L}}$,  is identical to  $d_t$ as defined in Construction~\ref{con} for all $t\geq 0$. Note that by definition, we have $d^{\mathcal{L}}_t=\begin{pmatrix}-d_{t-1}^{\mathcal{J\cap K}}&0\\f_{t-1}& d_{t}^{\mathcal{J}}\oplus d_{t}^{\mathcal{K}}\end{pmatrix}.$ Let $\alpha=[K_1,\ldots,K_m]$ be a  basis element of $F_t^{\mathcal{L}}$. If $\alpha\in \mathfrak{B}_1$, then $d_t^{\mathcal{L}}(\alpha)=d_t^{\mathcal{J}}(\alpha)=d_t(\alpha)$, and if $\alpha\in \mathfrak{B}_2$, then $d_t^{\mathcal{L}}(\alpha)=d_t^{\mathcal{K}}(\alpha)=d_t(\alpha)$. Now assume $\alpha=[K_1,\ldots,K_m]\in \mathfrak{B}_3$. Note that $a_1\in K_1$ and $K_1\setminus \{a_1\}\neq \emptyset$. Write $K_1\setminus \{a_1\}=\{a_2,\ldots,a_{k_1}\}$, where $a_2<\cdots<a_{k_1}$.  Put $$\Omega=\{X_{p_i,a}\mid 2\leq i\leq m, |K_i|\geq 2, a\in K_i\}$$ and $$\overline{\Omega}=\{X_{p_i,a}\mid 2\leq i\leq m,  a\in K_i\}.$$ We consider the following cases.

\noindent{\it Case 1:}  If $k_1=2$, i.e., $K_1=\{a_1,a_2\}$, then \begin{equation*}\begin{split} d_t^{\mathcal{L}}(\alpha)&=-d_{t-1}^{\mathcal{J\cap K}}(\alpha)+f_{t-1}(\alpha)
\\&=-X_{p_1,a_1}d_{t-1}^{\mathcal{K}}([\{a_2\},K_2,\ldots,K_m])-X_{p_1,a_2}[\{a_1\},\ldots,K_m]+X_{p_1,a_1}[\{a_2\},\ldots,K_m]
\\&=\sum_{X_{p_i,a}\in \Omega}(-1)^{\sigma(X_{p_i,a},\overline{\Omega})+2}X_{p_i,a}X_{p_1,a_1}[\{a_2\},K_2,\ldots,K_i\setminus\{a\},\ldots,K_m]
\\&\qquad -X_{p_1,a_2}[\{a_1\},\ldots,K_m] +X_{p_1,a_1}[\{a_2\},\ldots,K_m]
\\&=\sum_{X_{p_i,a}\in \Omega}(-1)^{\sigma(X_{p_i,a},\overline{\Omega})+2}X_{p_i,a}[\{a_1,a_2\},K_2,\ldots,K_i\setminus\{a\},\ldots,K_m]
\\&\qquad -X_{p_1,a_2}[\{a_1\},\ldots,K_m] +X_{p_1,a_1}[\{a_2\},\ldots,K_m].
\end{split}\end{equation*}
Put $\Omega'=\Omega\cup \{X_{p_1,a_1},X_{p_1,a_2}\}$ and $\overline{\Omega'}=\overline{\Omega}\cup \{X_{p_1,a_1},X_{p_1,a_2}\}$. Then
\begin{equation*}\begin{split} d_t(\alpha)&=\sum_{X_{p_i,a}\in \Omega'}(-1)^{\sigma(X_{p_i,a},\overline{\Omega'})}[K_1,K_2,\ldots,K_i\setminus\{a\},\ldots,K_m]
\\&=X_{p_1,a_1}[\{a_2\},K_2,\ldots,K_m]-X_{p_1,a_2}[\{a_1\},K_2,\ldots,K_m]
\\&\quad+ \sum_{X_{p_i,a}\in \Omega}(-1)^{\sigma(X_{p_i,a},\overline{\Omega})+2}X_{p_i,a}[K_1,K_2,\ldots,K_i\setminus\{a\},\ldots,K_m]
\end{split}\end{equation*}
Therefore, we have $d_t^{\mathcal{L}}(\alpha)=d_t(\alpha)$, as required.

\noindent{\it Case 2:} If $k_1\geq 3$, we put $$\Omega''=\Omega\cup \{X_{p_1,a_i}\mid i=1,\ldots,k_1\}$$ and $$\overline{\Omega''}=\overline{\Omega}\cup \{X_{p_1,a_i}\mid i=1,\ldots,k_1\},$$ then
\begin{equation*}\begin{split} d_t^{\mathcal{L}}(\alpha)&=-d_{t-1}^{\mathcal{J\cap K}}(\alpha)+f_{t-1}(\alpha)
\\&=-X_{p_1,a_1}d_{t-1}^{\mathcal{K}}([K_1\setminus\{a_1\},K_2,\ldots,K_m])+X_{p_1,a_1}[K_1\setminus \{a_1\},\ldots,K_m]\\&=X_{p_1,a_1}[K_1\setminus \{a_1\},\ldots,K_m]-\sum_{i=2}^{k_1}(-1)^iX_{p_1,a_1}X_{p_1,a_i}[K_1\setminus\{a_1,a_i\},K_2,\ldots,K_m]
\\&-\sum_{X_{p_i,a}\in \Omega}(-1)^{\sigma(X_{p_i,a},\overline{\Omega})+(k_1-1)}X_{p_1,a_1}X_{p_i,a}[K_1\setminus \{a_1\} ,K_2,\ldots,K_i\setminus\{a\},\ldots,K_m]\\&=X_{p_1,a_1}[K_1\setminus \{a_1\},\ldots,K_m]-\sum_{i=2}^{k_1}(-1)^iX_{p_1,a_i}[K_1\setminus\{a_i\},K_2,\ldots,K_m]
\\&+\sum_{X_{p_i,a}\in \Omega}(-1)^{\sigma(X_{p_i,a},\overline{\Omega})+k_1}X_{p_i,a}[K_1,K_2,\ldots,K_i\setminus\{a\},\ldots,K_m]
\\&=\sum_{X_{p_i,a}\in \Omega''}(-1)^{\sigma(X_{p_i,a},\overline{\Omega''})}X_{p_i,a}[K_1,K_2,\ldots,K_i\setminus\{a\},\ldots,K_m]\\&=d_{t}(\alpha).
\end{split}\end{equation*}
This completes the proof.\end{proof}

Since the resolution presented in Theorem~\ref{main4.4} is completely determined by the combinatorial data of $P, \mathcal{A} $ and so on, we can immediately obtain the following result.

\begin{Corollary} The projective dimension and multigraded Betti numbers of a generalized co-letterplace ideal are independent of the underlying field $\mathbf{K}.$
\end{Corollary}

Let $I$ be a monomial ideal having linear quotients. If $I$ admits a regular decomposition function,  an explicit multigraded minimal free resolution for $I$ was provided in \cite{HT02}.  Furthermore, it was proved in \cite{DM} and in \cite{Good} independently  that this free resolution is supported by a regular cell complex.  It is natural to ask the following question.
\vspace{2mm}

\noindent {\bf \large Question:} Could we find  a regular cell complex that supports the resolution presented in Theorem~\ref{main4.4}?
\vspace{2mm}

\section{Powers}

In this section we   aim to demonstrate that if $\mathfrak{A}=\mathrm{Hom}(P,A_1,\ldots,A_m)$ then for all integers $k\geq 1$, the ring $R/\mathcal{L}(\mathfrak{A})^k$ can be represented as the quotient ring of $T/J$ modulo a regular sequence of variable differences. Here, $J$ is another generalized co-letterplace ideal residing in a larger ring $T$ compared to $R$.   Consequently, we can obtain an explicit multigraded minimal free resolution for all powers of such generalized co-letterplace ideals. It is worth to note that if $|A_i|\leq 2$ for all $i=1,\ldots,m$ then $J$ is the polarization of $\mathcal{L}(\mathfrak{A})^k$, but  this is not the case in general. We begin by introducing a new notation.

\begin{Notation} Let $P,\mathcal{A},\mathfrak{A}$ be as defined in Conventions~\ref{setup} and let $k\geq 1$. We denote  the ideal of $R$ generated by all the monomials $U_{f_1}U_{f_2}\cdots U_{f_k}$ with $f_1\leq f_2\leq \cdots \leq f_k\in \mathfrak{A}$ as $\mathcal{L}(\mathfrak{A};k)$.
\end{Notation}

It is clear that  $\mathcal{L}(\mathfrak{A};k)\subseteq \mathcal{L}(\mathfrak{A})^k$, but $\mathcal{L}(\mathfrak{A};k)\neq \mathcal{L}(\mathfrak{A})^k$ in general.

\begin{Lemma} For all $k\geq 1$, $\mathcal{L}(\mathfrak{A};k)$ is weakly polymatroidal.
\end{Lemma}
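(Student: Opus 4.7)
The plan is to adapt the argument of Lemma~\ref{weak} (the $k=1$ case) to products of $k$ sorted chains in $\mathfrak{A}$. The central combinatorial observation I would establish first is this: if $u=U_{f_1}\cdots U_{f_k}$ and $v=U_{g_1}\cdots U_{g_k}$ are two generators of $\mathcal{L}(\mathfrak{A};k)$ (with $f_1\le\cdots\le f_k$ and $g_1\le\cdots\le g_k$) that first disagree at the variable $X=X_{p_s,a}$, then the sorted chains they represent must agree in a much stronger pointwise way than a naive reading suggests, because the chain condition $f_1\le\cdots\le f_k$ forces $f_1(p_i)\le\cdots\le f_k(p_i)$ at every point $p_i$. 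Consequently, multiset equality of exponents at the variables preceding $X$ implies componentwise equality of the sorted chains at all $p_i$ with $i<s$, and a parallel fact holds at $p_s$ restricted to entries of value $<a$.

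With this observation in hand, the plan proceeds as follows. Assuming without loss of generality that $\deg_X(u)<\deg_X(v)$, I would denote by $r$ the number of entries of value $<a$ in the sorted tuple $(f_1(p_s),\ldots,f_k(p_s))$ and by $c_u<c_v$ the multiplicities of the value $a$ in the $f$- and $g$-tuples at $p_s$. I would then single out the index $i_0=r+c_u+1$, for which $f_{i_0}(p_s)>a=g_{i_0}(p_s)$, and define a modified map $h$ that agrees with $f_{i_0}$ outside $p_s$ and satisfies $h(p_s)=a$. The bulk of the work is to verify: (i) $h$ is isotone; (ii) $h$ lies in $\mathfrak{A}$; (iii) the sequence obtained by replacing $f_{i_0}$ with $h$ is still a non-decreasing chain in $\mathfrak{A}$. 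Once these are in place, the product $U_h\prod_{j\ne i_0}U_{f_j}$ realizes the replacement $Xu/Y$ required in Definition~\ref{w}, with $Y=X_{p_s,f_{i_0}(p_s)}$ which is less than $X$ in the variable order because $f_{i_0}(p_s)>a$.

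The main obstacle I expect is not any single verification but the isotonicity check (i), and specifically the sub-case $p_i<p_s$. There one needs $h(p_i)\le h(p_s)=a$, and $h(p_i)$ is defined as $f_{i_0}(p_i)$; the inequality $f_{i_0}(p_i)\le a$ is not immediate from $f_{i_0}$ alone, which only gives $f_{i_0}(p_i)\le f_{i_0}(p_s)>a$. This is precisely where the matching observation becomes indispensable: it supplies $f_{i_0}(p_i)=g_{i_0}(p_i)\le g_{i_0}(p_s)=a$. Assertions (ii) and (iii) are comparatively routine: (ii) follows from $h\le f_{i_0}$ together with $\mathfrak{A}$ being a poset ideal, and (iii) reduces to $f_{i_0-1}(p_s)\le a$, which is forced by the choice $i_0=r+c_u+1$. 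After these verifications, the weakly polymatroidal property follows by direct appeal to Definition~\ref{w}.
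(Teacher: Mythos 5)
Your proposal is correct and takes essentially the same route as the paper's proof: after the componentwise-matching observation for sorted chains, you single out the first index (your $i_0$, the paper's $t$) where the $p_s$-values of the two chains differ and lower that one factor of the lex-smaller generator to the value $a$, exactly as the paper does with its map $h$. The verifications you flag — isotonicity in the sub-case $p_i<p_s$ via the matching, $h\in\mathfrak{A}$ from the poset-ideal property, and the chain condition from $f_{i_0-1}(p_s)\le a$ — are precisely the steps the paper leaves as ``routine,'' and your resolutions of them are correct.
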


\begin{proof} Let $\alpha$ and $\beta$ be distinct minimal generator of  $\mathcal{L}(\mathfrak{A};k)$. We may write $\alpha=U_{f_1}U_{f_2}\cdots U_{f_k}$ and $\beta=U_{g_1}U_{g_2}\cdots U_{g_k}$, where $f_1\leq f_2\leq \cdots \leq f_k\in \mathfrak{A}$ and $g_1\leq g_2\leq \cdots \leq g_k\in \mathfrak{A}$. Let $X_{p_s,a}$ be the maximal variable $X$ such that $\deg_X(\alpha)>\deg_X(\beta)$. Then $f_i(p_j)=g_i(p_j)$ for $i=1,\ldots,k$ and $j=1,\ldots, s-1$, and there is $t\in [k]$ such that $f_i(p_s)=g_i(p_s)$ for $i=1,\ldots, t-1$ and $a=f_t(p_s)<g_t(p_s)$. Define   a map: $h:P\rightarrow [n]$ as follows: $$h(p_i)=\left\{
                       \begin{array}{ll}
                         a, & \hbox{$i=s$;} \\
                         g_t(p_i), & \hbox{$i\neq s$.}
                       \end{array}
                     \right.$$
                     Then, it is routine to verify that $h\in \mathfrak{A}$ and $g_{t-1}\leq h\leq g_t$. Put $b=g_t(p_s)$. Then $\frac{\beta X_{p_s,a}}{X_{p_s,b}}=U_{g_1}\cdots U_{g_{t-1}}U_hU_{g_{t+1}}\cdots U_k\in L(\mathfrak{A};k)$. This completes the proof.
\end{proof}

Let $P, \mathcal{A}, \mathfrak{A}$ be the same as in Conventions~\ref{setup}. Let $k\geq 1$.  We define $P^k, \mathcal{A}^k, \mathfrak{A}^k$ as follows.

\begin{itemize}
  \item $P^k$ is a poset, as a set $P^{k}=\{p_{i,j}\mid i=1,\ldots,m, j=1,\ldots,k\}$. Furthermore, $$p_{i_1,j_1}\leq p_{i_2,j_2}\Longleftrightarrow p_{i_1}\leq p_{i_2} \mbox{ and } j_1\leq j_2.$$ One may observe that $P^k$  is isomorphic to the direct product $P\times [k]$.

  \item The map $\mathcal{A}^{k}$ on $P^{k}$ is defined as follows: for $i=1,\ldots,m$ and $j=1,\ldots,k$, we assign  $\mathcal{A}^{k}(p_{i,j})=A_i$.
  \item Let $\mathcal{M}(\mathfrak{A})$  denote the set of all maximal elements of $\mathfrak{A}.$ We define $\mathfrak{A}^{k}$ to be the poset ideal of $\mathrm{Hom}(P^k,\mathcal{A}^k)$ generated by $\{\overline{g}\mid g\in \mathcal{M}(\mathfrak{A})\}$. Here, for $g\in \mathfrak{A}$, $\overline{g}$ is given by $\overline{g}(p_{i,j})=g(p_i)$ for $i=1,\ldots,m$ and $j=1,\ldots,k$.
\end{itemize}

  A technique  for generating a  regular sequence for a quotient ring of a polynomial ring modulo  a co-letterplace ideal is presented in \cite[Theorems 5.6]{FGH}. However, it is unclear whether the regular sequence presented in the following theorem  can be produced in a similar technique.

\begin{Theorem} \label{power} Keep the notions above. Let $T$ be the polynomial ring $$\mathbf{K}[X_{p_{i,j},a}\mid 1\leq i\leq m, 1\leq j\leq k, a\in A_i].$$ Then
 $R/\mathcal{L}(\mathfrak{A};k)$ is the quotient ring of    $T/\mathcal{L}(P^k, \mathcal{A}^k; \mathfrak{A}^{k})$ by a regular sequence of variable differences.
 \end{Theorem}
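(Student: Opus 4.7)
The plan is to define the natural surjection $\phi \colon T \twoheadrightarrow R$ sending $X_{p_{i,j},a} \mapsto X_{p_i,a}$, show that $\phi$ carries $J := \mathcal{L}(P^k,\mathcal{A}^k;\mathfrak{A}^k)$ onto $\mathcal{L}(\mathfrak{A};k)$, and verify that the kernel of $\phi$ is a regular sequence modulo $J$. The kernel is generated by
\[
\Delta := \{X_{p_{i,j},a} - X_{p_{i,1},a} \mid 1 \leq i \leq m,\ 2 \leq j \leq k,\ a \in A_i\},
\]
a collection of $(k-1)\cdot\mathrm{size}(\mathcal{L})$ variable differences, matching the difference in variable counts between $T$ and $R$.

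For the image step, I would set up the bijection $h \longleftrightarrow (f_1,\ldots,f_k)$ between $\mathfrak{A}^k$ and weakly increasing $k$-chains in $\mathfrak{A}$ via $f_j(p_i) := h(p_{i,j})$. The partial order on $P^k$ makes isotonicity of $h$ equivalent to each $f_j$ being isotone together with $f_1 \leq \cdots \leq f_k$ pointwise. Since $\mathfrak{A}^k$ is the poset ideal generated by $\{\overline{g} : g \in \mathcal{M}(\mathfrak{A})\}$ and $\mathfrak{A}$ is itself a poset ideal, one has $h \in \mathfrak{A}^k$ iff all $f_j \in \mathfrak{A}$: given such an increasing chain, pick $g \in \mathcal{M}(\mathfrak{A})$ with $f_k \leq g$ and observe $h \leq \overline{g}$, while conversely $h \leq \overline{g}$ forces each $f_j \leq g \in \mathfrak{A}$. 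Direct computation gives $\phi(U_h) = U_{f_1} \cdots U_{f_k}$, so $\phi(J) = \mathcal{L}(\mathfrak{A};k)$ and $\phi$ descends to a surjection $\overline{\phi} \colon T/J \twoheadrightarrow R/\mathcal{L}(\mathfrak{A};k)$ whose kernel is the image of $(\Delta)$, yielding $T/(J + (\Delta)) \cong R/\mathcal{L}(\mathfrak{A};k)$.

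The heart of the proof—and what I expect to be the main obstacle—is showing $\Delta$ is a regular sequence modulo $J$. My plan is to reduce this to a Hilbert-series identity: since $\Delta$ is linear and the preceding step gives $T/(J+(\Delta)) \cong R/\mathcal{L}(\mathfrak{A};k)$, the standard coefficient-wise inequality $\mathrm{Hilb}(T/(J+(\Delta));t) \geq (1-t)^{|\Delta|}\,\mathrm{Hilb}(T/J;t)$ becomes equality exactly when $\Delta$ is regular. Both Hilbert series are computable from linear quotients: $J$ is weakly polymatroidal as a generalized co-letterplace ideal by Lemma~\ref{weak}, and $\mathcal{L}(\mathfrak{A};k)$ is weakly polymatroidal by the lemma immediately preceding the theorem; hence both ideals have linear quotients, with Betti numbers given by $\beta_j = \sum_u \binom{|\mathrm{set}(u)|}{j}$. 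The technical crux is to show the bijection $h \longleftrightarrow (f_1,\ldots,f_k)$ preserves $|\mathrm{set}|$ by comparing how a predecessor of $U_h$ in the $\leq_\ell$-order on $\mathfrak{A}^k$ corresponds under $\phi$ to a predecessor of $U_{f_1}\cdots U_{f_k}$ in the $\leq_\ell$-order on $k$-chains in $\mathfrak{A}$. Once this matching is verified, the Hilbert-series identity, and hence regularity and the theorem, follow.
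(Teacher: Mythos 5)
Your reduction steps are fine and coincide with the paper's setup: the bijection $h\leftrightarrow(f_1,\dots,f_k)$ between $\mathfrak{A}^k$ and weakly increasing chains in $\mathfrak{A}$, the identity $\phi(U_h)=U_{f_1}\cdots U_{f_k}$, and the presentation $T/(J+(\Delta))\cong$ the image of $R$ modulo $\mathcal{L}(\mathfrak{A};k)$ are all exactly what the paper does (minor quibble: $\phi$ surjects only onto the subring generated by the $X_{p_i,a}$ with $a\in A_i$, so ``matching the difference in variable counts between $T$ and $R$'' is not literally correct, though harmless). The divergence is in how regularity of $\Delta$ is proved. The paper does it directly: it shows each difference $X_{i,a}^{(s)}-X_{i,a}^{(t)}$ is a nonzerodivisor on $T/J$ by a concrete monomial argument --- if $X_{i,a}^{(s)}W$ and $X_{i,a}^{(t)}W$ lie in $J$ via $U_f$ and $U_g$, one builds $h=(f_1\wedge g_1,\dots,f_s\wedge g_s,f_{s+1},\dots,f_k)\in\mathfrak{A}^k$ (using that $\mathfrak{A}$, being a poset ideal, is closed under meets and that $g_s(i)\le g_t(i)=a=f_s(i)$) and checks $U_h$ divides $W$ --- and then iterates this argument after each successive specialization.

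Your route instead defers everything to the claim that the bijection preserves $|\mathrm{set}(u)|$, equivalently that $J$ and $\mathcal{L}(\mathfrak{A};k)$ have the same $K$-polynomial, and this is a genuine gap: it is not proved, and it is essentially where all the content of the theorem sits (indeed it is a consequence of the theorem, since specialization by a regular sequence preserves Betti numbers). Two concrete problems. First, the paper computes $\mathrm{set}(U_h)=\Delta_h$ only for generalized co-letterplace ideals; for $\mathcal{L}(\mathfrak{A};k)$ you only know weak polymatroidality, so you must first determine its colon sets. Second, your ``predecessor matching'' sketch handles only the easy direction: each $X_{p_{i,j},a}\in\Delta_h$ does give a lex-earlier generator of $\mathcal{L}(\mathfrak{A};k)$ with colon $(X_{p_i,a})$, and distinct elements of $\Delta_h$ give distinct variables (at most one $j$ per pair $(i,a)$ can occur). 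But the reverse inclusion --- that every variable in $\mathrm{set}(U_{f_1}\cdots U_{f_k})$ lifts to an element of $\Delta_h$ --- is the hard part, and proving it requires exactly the kind of meet/divisibility manipulation the paper uses for its direct nonzerodivisor argument, so the Hilbert-series detour does not actually avoid the crux. (A smaller caveat: for $|\Delta|>1$ the ``standard coefficient-wise inequality'' you invoke is not valid as stated, because the correction terms carry factors $(1-t)^{r-i}$; what is true, and what you actually need, is the equivalence ``$\mathrm{Hilb}(T/(J+(\Delta)))=(1-t)^{|\Delta|}\mathrm{Hilb}(T/J)$ iff $\Delta$ is a regular sequence,'' which can be proved by a lowest-degree analysis of the colon modules --- so the criterion is salvageable, but it should be justified, not cited as standard.) As it stands, the proposal is a plausible alternative strategy with its central combinatorial identity unestablished.
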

\begin{proof} Put $\mathfrak{A}^{(k)}=\{(f_1,\ldots,f_k)\mid f_i\in \mathfrak{A} \mbox{ for } i=1,\ldots, k \mbox{ and } f_1\leq f_2\leq \cdots \leq f_k\}$. Let $(f_1,\ldots,f_k)\in \mathfrak{A}^{(k)}$, we define a map $f$ on $P^k$ by
$$f(p_{i,j})=f_j(p_i) \mbox{ for } i=1,\ldots,m, j=1,\ldots,k.$$
One easily checks that $f$ belongs to $\mathrm{Hom}(P^k, \mathcal{A}^k)$. Take $g\in \mathcal{M}(\mathfrak{A})$ such that $f_k\leq g$, then $f(p_{i,j})\leq f_k(p_i)\leq g(p_i)=\overline{g}(p_{i,j})$  for all possible $i,j$. This implies $f\in \mathfrak{A}^k$.
Conversely, let $f\in \mathfrak{A}^k$, we define $f_j(p_i)=f(p_{i,j})$ for $i=1,\ldots,m, j=1,\ldots,k$. Then it is easy to check that $(f_1,\ldots,f_k)\in \mathfrak{A}^{(k)}$. Thus, we establish a bijection between $\mathfrak{A}^{(k)}$ and $\mathfrak{A}^k$. In what follows we will identify $\mathfrak{A}^{(k)}$ with $\mathfrak{A}^k$ and  write $f=(f_1,\ldots,f_k)$ when $f$ corresponds to  $(f_1,\ldots,f_k)$.

To simplify the notation, we will use $X_{i,a}^{(j)}$ to represent $X_{p_{i,j},a}$ and $f(i)$ to represent $f(p_i)$ in the rest part of this  proof. Under these notions, for a given $f=(f_1,\ldots,f_k)\in \mathfrak{A}^k$, we have $$U_f=\prod_{i=1}^m\prod_{j=1}^kX_{p_{i,j},f(p_{i,j})}=\prod_{i=1}^m X_{i,f_1(i)}^{(1)}\cdots X_{i,f_k(i)}^{(k)}.$$

Let us fix $i\in [m]$, $a\in A_i$ and $1\leq s<t\leq k$. We now claim that $X_{i,a}^{(s)}-X_{i,a}^{(t)}$ is  regular on   $T/\mathcal{L}(\mathfrak{A}^{k})$.  To prove this, we assume on the contrary that $X_{i,a}^{(s)}-X_{i,a}^{(t)}$ is a zero-divisor on $T/\mathcal{L}(\mathfrak{A}^{k})$. Then, there is a monomial $W$ such that $W\notin \mathcal{L}(\mathfrak{A}^{k})$, while $X_{i,a}^{(s)}W$ and $X_{i,a}^{(t)}W$ both belong to $\mathcal{L}(\mathfrak{A}^{k})$.
We may write $$X_{i,a}^{^{(s)}}W=U_fW_1 \mbox{ and }   X_{i,a}^{(t)}W=U_gW_2$$ for some $f,g\in \mathfrak{A}^k$. It is easy to see that  both $X_{i,a}^{^{(s)}}$ divides $U_f$ and $X_{i,a}^{^{(t)}}$ divide $U_g$. Moreover, $$[U_f/X_{i,a}^{^{(s)}}, U_g/X_{i,a}^{(t)}] \mbox{ divides } W.$$  Here, $[U,V]$ denotes the least common multiple of monomials $U$ and $V$. Let $f=(f_1,\ldots,f_k)$ and $g=(g_1,\ldots,g_k)$. Then $$U_f=\prod_{j=1}^m X_{j,f_1(j)}^{(1)}\cdots X_{j,f_k(j)}^{(k)} \mbox{ and } U_g=\prod_{j=1}^m X_{j,g_1(j)}^{(1)}\cdots X_{j,g_k(j)}^{(k)}.$$
It follows that \begin{equation*}\begin{split}[U_f/X_{i,a}^{^{(s)}}, U_g/X_{i,a}^{(t)}]=&\prod_{j\in [m]\setminus\{ i\}} [X_{j,f_1(j)}^{(1)}, X_{j,g_1(j)}^{(1)}]\cdots [X_{j,f_k(j)}^{(k)}, X_{j,g_k(j)}^{(k)}]\\&\cdot \prod_{\ell\in [k] \setminus \{s,t\}}[X_{i,f_{\ell}(i)}^{(\ell)}, X_{i,g_{\ell}(i)}^{(\ell)}]\cdot X_{i,g_s(i)}^{^{(s)}}\cdot X_{i,f_t(i)}^{(t)}.\end{split}\end{equation*}
On the other hand, since $f_s(i)=a$ and $g_t(i)=a$, it follows that  $g_s(i)\leq g_t(i)= f_s(i)$, and thus $\min\{f_s(i),g_s(i)\}=g_s(i)$.

To proceed, we require a new notion. For $f,g\in \mathfrak{A}$, let $f\wedge g$ denote the map from $P$ to $[n]$ defined as $(f\wedge g)(i)=\min\{f(i), g(i)\}$ for all $i=1,\ldots,m$. It can be easily verified that $f\wedge g$ also belongs to $\mathfrak{A}$.

 Put $$h=(f_1\wedge g_1,\ldots,f_s\wedge g_s, f_{s+1}, f_{s+2},\ldots, f_k).$$
Then, $h$ belongs to $\mathfrak{A}^{(k)}=\mathfrak{A}^k$. Furthermore, since $(f_s\wedge g_s)(i)=g_s(i)$, it follows that  $U_h$ divides $[U_f/X_{i,a}^{^{(s)}}, U_g/X_{i,a}^{(t)}]$ and therefore it also divides $W$.  This leads to a contradiction. Hence, the claim has been proven.

 We next show   that variable differences $$X_{i,a}^{(s)}-X_{i,a}^{(s+1)}, \  i=1,\ldots,m,\ a\in A_i,\ 1\leq s\leq k-1$$ form  a regular sequence on $T/\mathcal{L}(\mathfrak{A}^k)$.  Note that, by e.g. \cite[Theorem A.3.4]{HH}, the order of the elements of this sequence does't matter.

Fix $i\in [m], a\in A_i$, then, we have $X_{i,a}^{(1)}-X_{i,a}^{(2)}$ is regular on $\mathcal{L}(\mathfrak{A}^k)$ by the claim. Furthermore,   modulo the relation $X_{i,a}^{(1)}-X_{i,a}^{(2)}$, $\mathcal{L}(\mathfrak{A}^k)$ is isomorphic to the monomial ideal $\mathcal{L}_1$, where  the  generators of $\mathcal{L}_1$ are the same as  $\mathcal{L}(\mathfrak{A}^k)$ except  replacing   $X_{i,a}^{(1)}$  with $X_{i,a}^{(2)}$ in each minimal generator of $\mathcal{L}(\mathfrak{A}^k)$  that contains $X_{i,a}^{(1)}$. Then, taking the same method in proving the claim, we can prove   $X_{i,a}^{(2)}-X_{i,a}^{(3)}$ is regular on $\mathcal{L}_1$. To be precise, for each $f\in \mathfrak{A}^k$, we let $$U^1_f=\left\{
                                                                                                             \begin{array}{ll}
                                                                                                               \frac{U_fX_{i,a}^{(2)}}{X_{i,a}^{(1)}}, & \hbox{$X_{i,a}^{(1)} \mbox{ divides } U_f$;} \\
                                                                                                               U_f, & \hbox{otherwise.}
                                                                                                             \end{array}
                                                                                                           \right.
$$
Then, $\mathcal{L}_1=(U^1_f\mid f\in \mathfrak{A}^k)$. By assuming the contrary, there is a monomial $W$ that does not belong to $\mathcal{L}_1$, but satisfies the conditions that $U^1_f$ divides $X_{i,a}^{(2)}W$ and $U^1_g$  divides $X_{i,a}^{(3)}W$ for some $f,g\in \mathfrak{A}^k$. Then, we could check that $U^1_h$ divides $W$, which leads a contradiction. Here, $h$ is constructed as in the proof of the above claim. Continuing in this way, we can see that the sequence given above is indeed a regular sequence.

Finally, it can be easily seen that  $\mathcal{L}(\mathfrak{A}^k)$, modulo the above  sequence, is isomorphic to $\mathcal{L}(\mathfrak{A};k)$. This completes the proof.
\end{proof}

We have known that $\mathrm{Hom}(P,A_1,\ldots,A_m)$ is a poset naturally. In fact, it is also a lattice, where $f\vee g$ and $f\wedge g$ are given by $(f\vee g)(p_i)=\max\{f(p_i),g(p_i)\}$ and $(f\wedge  g)(p_i)=\min\{f(p_i),g(p_i)\}$ for all $i\in [m]$, respectively.  We now show that $\mathcal{L}(\mathfrak{A};k)=\mathcal{L}(\mathfrak{A})^k$ for some (equivalently for all) $k\geq 2$ if and only if $\mathfrak{A}$ is a sublattice of $\mathrm{Hom}(P,A_1,\ldots,A_m)$.

The proof can be simplified by using the concept of multi-sets. Recall a {\it multi-set} is a mathematical structure that allows duplicate elements. Unlike a set, which only allows unique elements, a multi-set can contain multiple instances of the same element.
For example, a multi-set $\{1, 2, 2, 3, 3, 3\}$ contains three instances of the element 2 and three instances of the element 3. The {\it size} of a multi-set $A$, denoted by $|A|$,  is the number of elements of $A$. Thus, $|\{1, 2, 2, 3, 3, 3\}|=6$. Let $A$ be a set. We  denote the multi-set that contains $k$ copies of all elements of $A$ as $\{A\}_{k}$.  For example, if $A=\{1,2\}$, then $\{A\}_2=\{1,1,2,2\}$. In this section we only consider sub-multi-sets of $\{[n]\}_k$ for some $k\geq 1$.

  Given a multi-set $A\subseteq \{[n]\}_k$, we define $N_{\ell}(A)$ to be the $\ell$-th minimal element of $A$ if $1\leq \ell \leq |A|$ and to be $n$ if $\ell>|A|$. In other words, if we write $A=\{a_1,\ldots,a_r\}$ with $a_1\leq a_2\leq \cdots\leq a_r$ then $N_{\ell}(A)=a_{\ell}$ if $\ell\leq r$ and $N_{\ell}(A)=n$ if $\ell>r$.

\begin{Lemma} \label{N} Let $A,B$ be multi-sets contained in $\{[n]\}_k$ such that there is a bijective map $\phi: A\rightarrow B$ satisfying $a\leq \phi(a)$ for all $a\in A$. Then $N_{\ell}(A)\leq N_{\ell}(B)$ for all $\ell=1,2,\ldots,|A|$.
\end{Lemma}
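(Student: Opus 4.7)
The plan is to prove the contrapositive by a direct pigeonhole argument after sorting both multi-sets in increasing order. Since $\phi: A \to B$ is a bijection, we have $|A|=|B|=:r$. Write the elements of $A$ in non-decreasing order as $a_1 \leq a_2 \leq \cdots \leq a_r$ and similarly $b_1 \leq b_2 \leq \cdots \leq b_r$ for $B$, so that $N_\ell(A) = a_\ell$ and $N_\ell(B) = b_\ell$ for every $\ell \in \{1, \ldots, r\}$. (For $\ell > r$ both values equal $n$, so there is nothing to show in that range, hence the restriction to $\ell \leq |A|$.) The goal then becomes to verify $a_\ell \leq b_\ell$ for every $\ell$.

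First I would fix an arbitrary $\ell \in \{1, \ldots, r\}$ and assume, toward a contradiction, that $a_\ell > b_\ell$. By the way the sequences were ordered, this forces $a_i \geq a_\ell > b_\ell \geq b_j$ for every pair $(i,j)$ with $i \geq \ell$ and $j \leq \ell$. Since $\phi$ satisfies $\phi(a) \geq a$ for each $a \in A$, I would conclude that, for each of the $r-\ell+1$ occurrences $a_\ell, a_{\ell+1}, \ldots, a_r$, its image $\phi(a_i)$ is strictly greater than every one of $b_1, \ldots, b_\ell$, and hence must coincide with one of the occurrences $b_{\ell+1}, \ldots, b_r$.

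The contradiction is now immediate: the injectivity of $\phi$ forces $r-\ell+1$ distinct occurrences of $A$ to map into the set of $r-\ell$ occurrences $\{b_{\ell+1}, \ldots, b_r\}$ in $B$, which is impossible. Therefore $a_\ell \leq b_\ell$ for every $\ell$, as required.

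I do not foresee any real obstacle here; the only subtlety is purely notational, namely handling ``bijection between multi-sets'' correctly by identifying each multi-set with its indexed list of occurrences so that equal elements are treated as distinct tokens. Once that convention is fixed, the pigeonhole step is clean. An alternative route would be induction on $\ell$ (peeling off the smallest element $a_1$ together with $\phi(a_1)$), but the one-shot pigeonhole proof above is shorter and avoids having to check that the induced bijection on the remaining multi-sets still satisfies the monotonicity condition.
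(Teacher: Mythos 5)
Your argument is correct: after sorting, the assumption $a_\ell > b_\ell$ together with $\phi(a)\geq a$ forces the $r-\ell+1$ occurrences $a_\ell,\dots,a_r$ to map injectively into the $r-\ell$ occurrences $b_{\ell+1},\dots,b_r$, which is the desired contradiction, and your convention of treating the multi-sets as indexed lists of occurrences makes the pigeonhole step rigorous. Note that the paper states Lemma~\ref{N} without any proof (evidently regarding it as elementary), so there is no argument of the authors to compare against; your proof is a complete and appropriate justification of exactly what the paper leaves implicit.
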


\begin{Proposition}  \label{equality}  Under Conventions~\ref{setup},  the following statements are equivalent:
\begin{enumerate}
  \item $\mathcal{L}(\mathfrak{A};k)=\mathcal{L}(\mathfrak{A})^k$ for some $k\geq 2$;
  \item  $\mathfrak{A}$ is  a sublattice of $\mathrm{Hom}(P,A_1,\ldots, A_m)$;
  \item  $\mathfrak{A}$ has a unique maximal element;
    \item There exist $B_i\subseteq A_i$ for $i=1,\ldots,m$ such that $\mathfrak{A}=\mathrm{Hom}(P,B_1,\ldots, B_m)$;
  \item  $\mathcal{L}(\mathfrak{A};k)=\mathcal{L}(\mathfrak{A})^k$ for all $k\geq 1$.
\end{enumerate}
\end{Proposition}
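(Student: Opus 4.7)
The plan is to establish the cycle of implications (5) $\Rightarrow$ (1) $\Rightarrow$ (2) $\Rightarrow$ (3) $\Rightarrow$ (4) $\Rightarrow$ (5). The implication (5) $\Rightarrow$ (1) is tautological, and (2) $\Rightarrow$ (3) follows immediately from finiteness: since $\mathrm{Hom}(P,A_1,\ldots,A_m)$ is finite, any sublattice $\mathfrak{A}$ has a maximum, namely the join of all its elements. Also, (3) $\Rightarrow$ (4) is essentially bookkeeping: if $g$ is the unique maximal element of $\mathfrak{A}$, then every $f\in\mathfrak{A}$ satisfies $f\leq g$, and conversely, since $\mathfrak{A}$ is a poset ideal, every isotone $f\leq g$ with $f(p_i)\in A_i$ lies in $\mathfrak{A}$. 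Setting $B_i:=\{a\in A_i : a\leq g(p_i)\}$ one checks that $\mathfrak{A}=\mathrm{Hom}(P,B_1,\ldots,B_m)$ on the nose.

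For (1) $\Rightarrow$ (2), I would first observe that the $\wedge$ direction is free: since $f\wedge g\leq f$ and $\mathfrak{A}$ is a poset ideal, $f\wedge g\in\mathfrak{A}$ whenever $f,g\in\mathfrak{A}$. To handle $\vee$, fix $f,g\in\mathfrak{A}$ and consider the product $U_f^{\,k-1}U_g$, which lies in $\mathcal{L}(\mathfrak{A})^k=\mathcal{L}(\mathfrak{A};k)$ by assumption. Hence $U_f^{\,k-1}U_g$ is divisible by some $U_{h_1}\cdots U_{h_k}$ with $h_1\leq\cdots\leq h_k$ in $\mathfrak{A}$, and matching total degrees forces equality. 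Comparing the multiplicity of each $X_{p_i,a}$ on both sides gives that the multi-set $\{h_1(p_i),\ldots,h_k(p_i)\}$ equals $\{f(p_i)\}_{k-1}\cup\{g(p_i)\}$ for every $i$. Since $h_1(p_i)\leq\cdots\leq h_k(p_i)$, it follows that $h_k(p_i)=\max\{f(p_i),g(p_i)\}=(f\vee g)(p_i)$, so $h_k=f\vee g$ and in particular $f\vee g\in\mathfrak{A}$.

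For (4) $\Rightarrow$ (5), which is the most computational part, take any generator $U_{f_1}\cdots U_{f_k}$ of $\mathcal{L}(\mathfrak{A})^k$ with each $f_j\in\mathfrak{A}=\mathrm{Hom}(P,B_1,\ldots,B_m)$. For each $i$, sort the multi-set $\{f_1(p_i),\ldots,f_k(p_i)\}$ in non-decreasing order and let $h_j(p_i)$ denote its $j$-th entry. Clearly $h_j(p_i)\in B_i$ and the equality $U_{f_1}\cdots U_{f_k}=U_{h_1}\cdots U_{h_k}$ holds by construction. The only non-trivial point is that each $h_j$ is isotone; this is exactly what Lemma~\ref{N} provides, applied to $A=\{f_1(p_i),\ldots,f_k(p_i)\}$, $B=\{f_1(p_\ell),\ldots,f_k(p_\ell)\}$ with $p_i<p_\ell$ and the bijection $f_j(p_i)\mapsto f_j(p_\ell)$, which is weakly increasing since each $f_j$ is isotone. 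Hence $h_j\in\mathfrak{A}$ and $h_1\leq\cdots\leq h_k$, which shows $U_{f_1}\cdots U_{f_k}\in\mathcal{L}(\mathfrak{A};k)$.

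The main obstacle will be the implication (1) $\Rightarrow$ (2): the inclusion $\mathcal{L}(\mathfrak{A};k)\subseteq\mathcal{L}(\mathfrak{A})^k$ is automatic for any $\mathfrak{A}$, so all of the ``interesting'' information sits in the opposite inclusion, and one has to extract from a single equality of products $U_f^{\,k-1}U_g=U_{h_1}\cdots U_{h_k}$ the coordinate-wise identification $h_k=f\vee g$. The multi-set bookkeeping in (4) $\Rightarrow$ (5) is the other delicate piece, but there Lemma~\ref{N} does the real work, so the argument is mostly assembly.
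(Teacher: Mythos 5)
Your proposal is correct and follows essentially the same route as the paper: the cycle (1)$\Rightarrow$(2)$\Rightarrow$(3)$\Rightarrow$(4)$\Rightarrow$(5)$\Rightarrow$(1), with the product $U_f^{\,k-1}U_g$ degree-and-multiplicity comparison yielding $f\vee g\in\mathfrak{A}$ for (1)$\Rightarrow$(2), and the coordinate-wise sorting of multi-sets via Lemma~\ref{N} for (4)$\Rightarrow$(5). Your explicit remark that $\wedge$-closure is free from the poset-ideal property is a detail the paper leaves implicit, but the substance is identical.
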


\begin{proof} (1)$\Rightarrow$ (2) Let $g_1, g_2\in \mathfrak{A}$.   We consider the monomial $U_{g_1}\cdots U_{g_1}U_{g_2}$, where $U_{g_1}$   repeats $(k-1)$ times. This monomial belongs to $\mathcal{L}(\mathfrak{A})^k$. By assumption (1), it also belongs to $\mathcal{L}(\mathfrak{A};k)$. Therefore,  there exist $f_1\leq f_2\leq \cdots \leq f_k\in \mathfrak{A}$ such that  $U_{g_1}\cdots U_{g_1}U_{g_2}=U_{f_1}U_{f_2}\cdots U_{f_k}$. From this equality,  it follows that $f_k=g_1\vee g_2$. This implies that $\mathfrak{A}$ is closed under taking  supremums, as required.

(2)$\Rightarrow$ (3) This is because every finite lattice has a unique maximal element.

(3)$\Rightarrow$ (4) Let $f$ be the unique maximal element of $\mathfrak{A}$ and put $B_i=\{a\in A_i\mid a\leq f(p_i)\}$ for $i=1,\ldots,m$. Then it follows immediately that $\mathfrak{A}=\mathrm{Hom}(P,B_1,\ldots, B_m)$.

(4)$\Rightarrow$ (5)  Let $\alpha\in \mathcal{L}(\mathfrak{A})^k$. Then there are $g_i\in \mathfrak{A}$ for $i=1,\ldots,k$ such that $\alpha=U_{g_1}\cdots U_{g_k}$. For $i=1,\ldots,m$, we let $K_i$ denote the multi-set $$K_i=\{g_1(p_i),\ldots,g_k(p_i)\}.$$ Then $K_i\subset \{A_i\}_k$  and $$\alpha=X_{p_1,K_1}\cdots X_{p_m,K_m}.$$
Given $\ell\in [k]$, we define $f_{\ell}: P\rightarrow [n]$ by $f_{\ell}(p_i)=N_{\ell}(K_i)$ for $i=1,\ldots,m$. Then  $f_1\leq f_2\leq \cdots \leq f_k$ and one easily checks that $f_i\in \mathrm{Hom}(P,B_1,\ldots,B_m)$ for $i=1,\ldots,k$. Therefore $\alpha=U_{f_1}\cdots U_{f_k}\in \mathcal{L}(\mathfrak{A};k)$.

(5)$\Rightarrow$ (1)  Automatically.
\end{proof}

Combining Proposition~\ref{equality} with Theorem~\ref{power}, we obtain the main result of this section.

\begin{Corollary} \label{powermain} For all $k\geq 1$,  $R/\mathcal{L}(P,\mathcal{A})^k$ is the quotient of $T/\mathcal{L}(P^k,\mathcal{A}^k)$ by a regular sequence of variable differences.
\end{Corollary}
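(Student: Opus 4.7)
The plan is to deduce the corollary as a direct combination of Theorem~\ref{power} and Proposition~\ref{equality}, applied to the special poset ideal $\mathfrak{A} = \mathrm{Hom}(P,\mathcal{A})$. Since $\mathrm{Hom}(P,\mathcal{A})$ is obviously a sublattice of itself (it equals $\mathrm{Hom}(P, A_1,\ldots,A_m)$ in the notation of Conventions~\ref{setup}), condition (2) of Proposition~\ref{equality} is satisfied trivially, so I get the key identification $\mathcal{L}(\mathfrak{A};k) = \mathcal{L}(\mathfrak{A})^k = \mathcal{L}(P,\mathcal{A})^k$ for every $k \geq 1$.

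Next, I would carefully unwind the construction of $\mathfrak{A}^k$ for this particular choice of $\mathfrak{A}$. The set $\mathrm{Hom}(P,\mathcal{A})$ has a unique maximum, namely the isotone map $g$ with $g(p_i) = \max(A_i)$ for all $i$ (this is isotone because $p_i < p_j$ forces all elements of $A_i$ to be $\leq \max(A_j)$ via any element of $\mathrm{Hom}(P,\mathcal{A})$; alternatively, one can argue via (3) of Proposition~\ref{equality}). Hence $\mathcal{M}(\mathfrak{A}) = \{g\}$ and $\mathfrak{A}^k$ is the poset ideal of $\mathrm{Hom}(P^k,\mathcal{A}^k)$ generated by the single map $\overline{g}$ defined by $\overline{g}(p_{i,j}) = \max(A_i)$. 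But every element $f \in \mathrm{Hom}(P^k, \mathcal{A}^k)$ automatically satisfies $f(p_{i,j}) \in A_i$ and therefore $f(p_{i,j}) \leq \max(A_i) = \overline{g}(p_{i,j})$, i.e., $f \leq \overline{g}$. Consequently $\mathfrak{A}^k = \mathrm{Hom}(P^k, \mathcal{A}^k)$ and thus $\mathcal{L}(P^k,\mathcal{A}^k;\mathfrak{A}^k) = \mathcal{L}(P^k, \mathcal{A}^k)$.

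With these two identifications in hand, Theorem~\ref{power} applied to $\mathfrak{A} = \mathrm{Hom}(P,\mathcal{A})$ immediately yields that $R/\mathcal{L}(P,\mathcal{A})^k = R/\mathcal{L}(\mathfrak{A};k)$ is the quotient of $T/\mathcal{L}(P^k,\mathcal{A}^k)$ by a regular sequence of variable differences, which is exactly the claim of the corollary.

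Since both ingredients have already been proved, there is no serious obstacle; the only thing to verify with care is the second identification $\mathfrak{A}^k = \mathrm{Hom}(P^k,\mathcal{A}^k)$, which is the only place where one must look inside the definitions rather than cite black boxes. In particular one should notice that the whole-$\mathrm{Hom}$ case is the simplest possible input for the construction in Theorem~\ref{power}, so the regular sequence produced there acts transparently on the explicit generators $U_{f_1}\cdots U_{f_k}$ of $\mathcal{L}(P,\mathcal{A})^k$, and no additional bookkeeping is required.
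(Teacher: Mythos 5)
Your overall strategy is exactly the paper's: Corollary~\ref{powermain} is obtained there by combining Proposition~\ref{equality} (applied with $\mathfrak{A}=\mathrm{Hom}(P,\mathcal{A})$, which satisfies condition (4) with $B_i=A_i$, so $\mathcal{L}(\mathfrak{A};k)=\mathcal{L}(\mathfrak{A})^k$) with Theorem~\ref{power}, and you are right that the one point to check by hand is the identification $\mathfrak{A}^k=\mathrm{Hom}(P^k,\mathcal{A}^k)$. However, your justification of that point contains a genuine error: the map $g$ with $g(p_i)=\max(A_i)$ need not be isotone, hence need not lie in $\mathrm{Hom}(P,\mathcal{A})$ at all, and in particular need not be its maximal element. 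For example, take $P=\{p_1<p_2\}$, $A_1=\{1,5\}$, $A_2=\{3\}$: then $\mathrm{Hom}(P,\mathcal{A})=\{(1,3)\}$, its unique maximal element is $(1,3)$, while your candidate $(5,3)$ is not isotone. The parenthetical argument ``$p_i<p_j$ forces all elements of $A_i$ to be $\leq\max(A_j)$'' is false (here $5>3$), and the subsequent step ``$f(p_{i,j})\leq\max(A_i)=\overline{g}(p_{i,j})$, hence $f\leq\overline{g}$'' compares $f$ with a map that is not the generator of $\mathfrak{A}^k$, so it does not prove $\mathfrak{A}^k=\mathrm{Hom}(P^k,\mathcal{A}^k)$.

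The identification is nevertheless true, and the correct argument is the one implicit in the proof of Theorem~\ref{power}: $\mathrm{Hom}(P,\mathcal{A})$ is closed under $f\vee g$ (the pointwise maximum of two isotone maps with values in the $A_i$ is again such a map), so being finite and nonempty it has a unique maximal element $g_{\max}$ (this is also (4)$\Rightarrow$(3) of Proposition~\ref{equality}), though $g_{\max}(p_i)$ may be strictly smaller than $\max(A_i)$. Now given any $f\in\mathrm{Hom}(P^k,\mathcal{A}^k)$, its ``columns'' $f_j(p_i):=f(p_{i,j})$ are isotone with $f_j(p_i)\in A_i$, hence $f_j\in\mathrm{Hom}(P,\mathcal{A})$ and $f_j\leq g_{\max}$ for every $j$, which gives $f\leq\overline{g_{\max}}$ and therefore $\mathfrak{A}^k=\mathrm{Hom}(P^k,\mathcal{A}^k)$; equivalently, this is the bijection $\mathfrak{A}^{(k)}\leftrightarrow\mathfrak{A}^k$ from the proof of Theorem~\ref{power} specialized to $\mathfrak{A}=\mathrm{Hom}(P,\mathcal{A})$. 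With that repair, the rest of your deduction goes through and coincides with the paper's proof.
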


We remark that if $|A_i|\leq 2$ for all $i=1,\ldots,m$, it is not difficult to see that  $\mathcal{L}(P^k,\mathcal{A}^k)$ is, in fact, the polarization of $\mathcal{L}(P,\mathcal{A})^k$. However, in general, this is not the case, as shown by the following example.
\begin{Example}\label{Ex5.4} \em Assume that  $P=\{p_1\}$ and $\mathcal{A}(1)=A_1=\{1,2,3\}$. We may write $\mathcal{L}(P,\mathcal{A})=(X_1,X_2,X_3)$, by identifying $X_{p_1,a}$ with $X_{a}$ for $a=1,2,3$. Accordingly, the minimal generators of $\mathcal{L}(P^2,\mathcal{A}^2)$ can  be listed as: $$X_1^{(1)}X_1^{(2)},\ X_2^{(1)}X_2^{(2)}, \ X_3^{(1)}X_3^{(2)},\ X_1^{(1)}X_2^{(2)},\ X_1^{(1)}X_3^{(2)},\ X_2^{(1)}X_3^{(2)}.$$
Then, $X_1^{(1)}-X_1^{(2)}$ is regular on $\mathcal{L}(P^2,\mathcal{A}^2)$, and the latter is isomorphic  to   $$\mathcal{L}'=(X_1^2, \ X_2^{(1)}X_2^{(2)}, \ X_3^{(1)}X_3^{(2)},\ X_1X_2^{(2)},\ X_1X_3^{(2)},\ X_2^{(1)}X_3^{(2)}),$$ modulo $X_1^{(1)}-X_1^{(2)}$.
Next, one may check $X_2^{(1)}-X_2^{(2)}$ is regular on $\mathcal{L}'$, and  $\mathcal{L}'$ is isomorphic to
$$\mathcal{L}''=(X_1^2, \ X_2^2, \ X_3^{(1)}X_3^{(2)},\ X_1X_2,\ X_1X_3^{(2)},\ X_2X_3^{(2)})$$  modulo $X_2^{(1)}-X_2^{(2)}$. Finally,  one may check again $X_3^{(1)}-X_3^{(2)}$ is regular on $\mathcal{L}''$, and  $\mathcal{L}''$ isomorphic to
$$\mathcal{L}'''=(X_1^2, \ X_2^2, \ X_3^2,\ X_1X_2,\ X_1X_3,\ X_2X_3)$$  modulo $X_3^{(1)}-X_3^{(2)}$.
Note that $\mathcal{L}'''=\mathcal{L}(P, \mathcal{A})^2$, it follows that $\mathcal{L}(P^2,\mathcal{A}^2)$ is isomorphic to $\mathcal{L}(P, \mathcal{A})^2$ modulo the regular sequence $X_1^{(1)}-X_1^{(2)}, X_2^{(1)}-X_2^{(2)}, X_3^{(1)}-X_3^{(2)}.$

On the other side, the polarization of $\mathcal{L}(P, \mathcal{A})^2$ is as follows:
$$I=(X_1^{(1)}X_1^{(2)},\ X_2^{(1)}X_2^{(2)}, \ X_3^{(1)}X_3^{(2)},\ X_1^{(1)}X_2^{(1)}, X_1^{(1)}X_3^{(1)},\ X_2^{(1)}X_3^{(1)}). $$
By observing the positions of $X_2^{(1)}$ and $ X_2^{(2)}$ in the generators of $I$ and $\mathcal{L}(P^2,\mathcal{A}^2)$, we can see that the permutation of variables cannot transform $\mathcal{L}(P^2,\mathcal{A}^2)$ into $I$. Therefore, $\mathcal{L}(P^2,\mathcal{A}^2)$ is not the polarization of $\mathcal{L}(P, \mathcal{A})^2$.
\end{Example}

\begin{Corollary} \label{powersc}  Keep the notation introduced in Theorem~\ref{power}   and let $J$ be the ideal generated by the regular sequence $X_{i,a}^{(s)}-X_{i,a}^{(s+1)}, i=1,\ldots,m, s=1,\ldots,k-1$. Then the multigraded minimal free resolution of $R/\mathcal{L}(P,\mathcal{A})^k$ is isomorphic to the complex $\mathbb{F}/J\mathbb{F}$. Here, $\mathbb{F}$ is the multigraded minimal free resolution of $T/\mathcal{L}(P^k,\mathcal{A}^k)$ as  given in Theorem~\ref{main4.4}.
\end{Corollary}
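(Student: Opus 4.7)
The plan is to apply the classical principle that quotienting a minimal free resolution by an ideal generated by a regular sequence on the resolved module produces a minimal free resolution of the quotient module. Specifically, if $\mathbf{y}$ is a sequence in $T$ that is $M$-regular, then $\mathbb{F}\otimes_T T/(\mathbf{y})=\mathbb{F}/J\mathbb{F}$ is a free resolution of $M/\mathbf{y}M$ over $T/(\mathbf{y})$ whenever $\mathbb{F}$ is a free resolution of $M$ over $T$. The bulk of the work is to verify that all hypotheses of this principle are in force in our setting and then to check that minimality survives the quotient.

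First I would assemble the inputs. Applying Theorem~\ref{main4.4} to the ideal $\mathcal{L}(P^k,\mathcal{A}^k)=\mathcal{L}(P^k,\mathcal{A}^k;\mathfrak{A}^k)$ of $T$ yields the minimal multigraded free resolution $\mathbb{F}$; here $\mathfrak{A}^k$ equals $\mathrm{Hom}(P^k,\mathcal{A}^k)$ because $\mathrm{Hom}(P,\mathcal{A})$ has a unique maximum (namely $p_i\mapsto\max A_i$) whose lift is the unique maximum of $\mathrm{Hom}(P^k,\mathcal{A}^k)$. Augmenting by $T\twoheadrightarrow T/\mathcal{L}(P^k,\mathcal{A}^k)$ we view $\mathbb{F}$ as a resolution of $T/\mathcal{L}(P^k,\mathcal{A}^k)$. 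Next, Corollary~\ref{powermain} guarantees that $\mathbf{y}=(X_{i,a}^{(s)}-X_{i,a}^{(s+1)})_{i,s}$ is a regular sequence on $T/\mathcal{L}(P^k,\mathcal{A}^k)$, and the map $T\to R$ sending each $X_{i,a}^{(s)}$ to $X_{p_i,a}$ factors through an isomorphism $T/J\cong R$ under which the image of $\mathcal{L}(P^k,\mathcal{A}^k)$ is $\mathcal{L}(P,\mathcal{A})^k$. Invoking the classical principle with $M=T/\mathcal{L}(P^k,\mathcal{A}^k)$ therefore gives that $\mathbb{F}/J\mathbb{F}$ is a free resolution of $R/\mathcal{L}(P,\mathcal{A})^k$ over $R$.

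It remains to show that $\mathbb{F}/J\mathbb{F}$ is minimal. For this I would appeal directly to Construction~\ref{con}: every non-zero entry of every differential matrix of $\mathbb{F}$ is $\pm X_{p_{i,j},a}$ for some indices $i,j,a$. Under the projection $T\to R$ each such entry becomes $\pm X_{p_i,a}$, which lies in the graded maximal ideal of $R$ and is in particular a non-unit. Hence all entries of the differentials of $\mathbb{F}/J\mathbb{F}$ are in the graded maximal ideal of $R$, which is the criterion for minimality of a multigraded free resolution. The main technical point, and the only place that genuinely requires care, is the invocation of the classical principle; this is handled routinely by induction on the length of $\mathbf{y}$, using at each stage the short exact sequence $0\to\mathbb{F}\xrightarrow{y}\mathbb{F}\to\mathbb{F}/y\mathbb{F}\to 0$ together with the long exact sequence in homology (regularity of $y$ kills the higher Tors), so there is no substantial obstacle beyond bookkeeping.
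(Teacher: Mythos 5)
Your proposal is correct and takes essentially the route the paper intends: combine Theorem~\ref{main4.4} applied to $\mathcal{L}(P^k,\mathcal{A}^k)$ with Corollary~\ref{powermain}, use that the variable differences form a regular sequence on $T/\mathcal{L}(P^k,\mathcal{A}^k)$ (so tensoring $\mathbb{F}$ with $T/J\cong R$ preserves exactness), and observe minimality because all differential entries map to non-units of $R$. One incidental slip worth fixing: the unique maximal element of $\mathrm{Hom}(P,\mathcal{A})$ need not be $p_i\mapsto\max A_i$ (that map can fail to be isotone); its existence follows instead from the lattice structure of $\mathrm{Hom}(P,\mathcal{A})$ noted in the paper, which is all your identification $\mathfrak{A}^k=\mathrm{Hom}(P^k,\mathcal{A}^k)$ requires.
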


We remark that $\mathbb{F}/J\mathbb{F}$ is derived from $\mathbb{F}$ by substituting $X_{i,a}^{(s)}$ with $X_{i,a}$ for $s=1,\ldots,k$ and replacing $T$ by $R$. To illustrate it, we consider the following example, in which we  continue to use the notation introduced in Example~\ref{Ex5.4}.
\begin{Example}\em Let $P,\mathcal{A}, \mathfrak{A}$ be the same as in Example~\ref{Ex5.4}, and denote by $S$  the polynomial ring $K[X_i^{(j)}\mid i=1,2,3, j=1,2]$. According to  Theorem~\ref{main4.4},  $\mathcal{L}(P^2,\mathfrak{A}^2)$ has a  linear resolution as follows:
$$0\rightarrow F_2\stackrel{d_2}{\rightarrow}F_1\stackrel{d_1}{\rightarrow} F_0\rightarrow 0.$$
Here, the bases and multidegrees of the basis elements are as follows:

$F_0=\bigoplus\limits_{i=1}^6Sw_i$ and $w_i$ for $i=1,\ldots,6$ form a basis of $F_0$. The  multidegrees of of these basis elements are given by  respectively: $$X_1^{(1)}X_1^{(2)},\ X_2^{(1)}X_2^{(2)}, \ X_3^{(1)}X_3^{(2)},\ X_1^{(1)}X_2^{(2)},\ X_1^{(1)}X_3^{(2)},\ X_2^{(1)}X_3^{(2)};$$
$F_1=\bigoplus\limits_{i=1}^8Sv_i$ and $v_i$ for $i=1,\ldots,8$ form a basis of $F_1$. The  multidegrees of of these basis elements are given by  respectively: $$X_1^{(1)}X_1^{(2)}X_2^{(2)},\ X_1^{(1)}X_2^{(2)}X_3^{(2)},\ X_1^{(1)}X_1^{(2)}X_3^{(2)},\ X_2^{(1)}X_2^{(2)}X_3^{(2)},$$ and $$ X_1^{(1)}X_2^{(1)}X_2^{(2)},\ X_1^{(1)}X_2^{(1)}X_3^{(2)},\  X_1^{(1)}X_3^{(1)}X_3^{(2)},\ X_2^{(1)}X_3^{(1)}X_3^{(2)};$$
$F_2=\bigoplus\limits_{i=1}^3Su_i$ and $u_i$ for $i=1,\ldots,3$ form a basis of $F_2$. The  multidegrees of of these basis elements are given by  respectively: $$ X_1^{(1)}X_1^{(2)}X_2^{(2)}X_3^{(2)},\  X_1^{(1)}X_2^{(1)}X_2^{(2)}X_3^{(2)}, \ X_1^{(1)}X_2^{(1)}X_3^{(1)}X_3^{(2)}.$$

The differential maps are given as follows:
$$d_1=\begin{pmatrix}X_2^{(2)}&0&X_3^{(2)}&0&0&0&0&0\\ 0&0&0&X_3^{(2)}&X_1^{(1)}&0&0&0\\ 0&0&0&0&0&0&X_1^{(1)}&X_2^{(1)}\\-X_1^{(2)}&X_3^{(2)}&0&0&-X_2^{(1)}&0&0&0\\0&-X_2^{(2)}&-X_1^{(2)}&0&0&-X_2^{(1)}&-X_3^{(1)}&0\\0&0&0&-X_2^{(2)}&0&X_1^{(1)}&0&-X_3^{(1)}\end{pmatrix}$$
and $$d_2=\begin{pmatrix}-X_3^{(2)}&-X_1^{(2)}&X_2^{(2)}&0&0&0&0&0\\0&-X_2^{(1)}&0&X_1^{(1)}&-X_3^{(2)}&X_2^{(2)}&0&0\\ 0&0&0&0&0&X_3^{(1)}&-X_2^{(1)}&X_1^{(1)}\end{pmatrix}^T.$$
Here, $-^{T}$ represent the transpose.

 By substituting  $X_i^{(1)}$ and $X_i^{(2)}$ with $X_i$ for $i=1,2,3$ and replacing $S$ by $R:=K[X_1, X_2, X_3]$ in the resolution of $\mathcal{L}(P^2,\mathfrak{A}^2)$, as given above ,   we can obtain  the resolution of $\mathcal{L}(P,\mathcal{A})^2=(X_1,X_2,X_3)^2$ using Corollary~\ref{powersc}. Thus, $(X_1,X_2,X_3)^2$ has the following resolution: $$0\rightarrow G_2\stackrel{f_2}{\rightarrow}G_1\stackrel{f_1}{\rightarrow} G_0\rightarrow 0.$$
Here, the bases and multidegrees of the basis elements are as follows:

$G_0=\bigoplus\limits_{i=1}^6Rw'_i$ and $w'_i$ for $i=1,\ldots,6$ form a basis of $F_0$. The  multidegrees of of these basis elements are given by  respectively: $$X_1^2,\ X_2^2, \ X_3^2,\ X_1X_2,\ X_1X_3,\ X_2X_3;$$
$G_1=\bigoplus\limits_{i=1}^8Rv'_i$ and $v'_i$ for $i=1,\ldots,8$ form a basis of $F_1$. The  multidegrees of of these basis elements are given by  respectively: $$X_1^2X_2,\ X_1X_2X_3,\ X_1^2X_3,\ X_2^2X_3,$$ and $$ X_1X_2^2,\ X_1X_2X_3,\  X_1X_3^2,\ X_2X_3^2.$$ Note that $v_2'$ and $v_6'$ share the same multidegree.

$G_2=\bigoplus\limits_{i=1}^3Ru'_i$ and $u'_i$ for $i=1,\ldots,3$ form a basis of $F_1$. The  multidegrees of of these basis elements are given by  respectively: $$ X_1^2X_2X_3,\  X_1X_2^2X_3, \ X_1X_2X_3^2.$$

The differential maps are given as follows:
$$f_1=\begin{pmatrix}X_2&0&X_3&0&0&0&0&0\\ 0&0&0&X_3&X_1&0&0&0\\ 0&0&0&0&0&0&X_1&X_2\\-X_1&X_3&0&0&-X_2&0&0&0\\0&-X_2&-X_1&0&0&-X_2&-X_3&0\\0&0&0&-X_2&0&X_1&0&-X_3\end{pmatrix}$$
and $$f_2=\begin{pmatrix}-X_3&-X_1&X_2&0&0&0&0&0\\0&-X_2&0&X_1&-X_3&X_2&0&0\\ 0&0&0&0&0&X_3&-X_2&X_1\end{pmatrix}^T.$$

\end{Example}

\section{More simplicial Spheres}
   Let $\Delta(\mathfrak{A})$ denote  the Stanley-Reisner complex of $\mathcal{L}(\mathfrak{A})^{\vee}$.  In this section, we will establish that $\Delta(\mathfrak{A})$ is always either a simplicial ball or a simplicial sphere.  Furthermore, we will classify when it is a simplicial sphere and when it is a simplicial ball in terms of the projective dimension of $\mathcal{L}(\mathfrak{A})$.

We begin with  describing the facets of $\Delta(\mathfrak{A})$. Let $V:=\{(p_i,a)\mid i\in [m], a\in A_i\}.$  We consider $\Delta(\mathfrak{A})$ as  a simplicial complex on the vertex set $V$. Let $\mathcal{F}$ denote the set of facets of $\Delta(\mathfrak{A})$. Then, by Stanley-Reisner theory (see e.g. \cite[chapter 1.5]{HH}), we have $I_{\Delta(\mathfrak{A})}=\bigcap_{F\in \mathcal{F}} \mathfrak{p}_F$ and so $\mathcal{L}(\mathfrak{A})=I_{\Delta(\mathfrak{A})}^{\vee}=(X_{V\setminus F}\:\; F\in \mathcal{F})$. Here, $\mathfrak{p}_F$ is the ideal generated by variables $X_{p_i,a}$ with $(p_i,a)\in V\setminus F$, and $X_{V\setminus F}$ is the product of variables $X_{p_i,a}$ with $(p_i,a)\in V\setminus F$.  For each $f\in \mathfrak{A}$, we put $$\Gamma f:=\{(p,f(p))\mid p\in P\}.$$ Then, it is not difficult to see that a subset  $F$  of $V$ is a facet of  $\Delta(\mathfrak{A})$ if and only if  $F=V\setminus \Gamma f$ for some $f\in \mathfrak{A}$.

 The following result  can be proven using a similar method as in the proof of \cite[Theorem 5.1]{DFN}. For the completeness, we will include a proof below. Recall from \cite[Theorem 11.4]{Bj} that a pure shellable (constructible) complex is either a PL ball or a PL sphere if every face of codimension 1 lies in at most two facets.

\begin{Proposition} Let $P,\mathcal{A}$ and $\mathfrak{A}$ be as in Conventions~\ref{setup}. Then $\Delta(\mathfrak{A})$ is a either a simplicial ball or a simplicial sphere.
\end{Proposition}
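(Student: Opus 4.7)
The plan is to verify the three hypotheses of [Bj, Theorem 11.4] cited in the remark preceding the statement: that a pure shellable complex in which every codimension-one face is contained in at most two facets is a PL ball or a PL sphere, whence a simplicial ball or a simplicial sphere by the discussion in the preliminaries.

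First, I would check purity. Writing $V=\{(p_i,a)\mid i\in[m],\ a\in A_i\}$, every facet has the form $V\setminus\Gamma f$ for some $f\in\mathfrak{A}$, and since $|\Gamma f|=m$ for all $f$, every facet of $\Delta(\mathfrak{A})$ has cardinality $|V|-m$. Thus $\Delta(\mathfrak{A})$ is pure of dimension $|V|-m-1$.

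Next I would establish shellability by invoking the Alexander dual correspondence of Herzog--Hibi--Zheng: for a simplicial complex $\Delta$, the ideal $I_{\Delta^\vee}$ has linear quotients if and only if $\Delta$ is shellable. Since $I_{\Delta(\mathfrak{A})}=\mathcal{L}(\mathfrak{A})^\vee$, taking Alexander duals yields $I_{\Delta(\mathfrak{A})^\vee}=\mathcal{L}(\mathfrak{A})$. By Lemma~\ref{weak}, $\mathcal{L}(\mathfrak{A})$ is weakly polymatroidal and hence has linear quotients, so $\Delta(\mathfrak{A})$ is shellable; combined with purity, it is pure shellable and therefore constructible.

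The remaining combinatorial step is the facet-counting bound. Let $F$ be a codimension-one face, so $|V\setminus F|=m+1$. A facet $V\setminus\Gamma g$ contains $F$ iff $\Gamma g\subseteq V\setminus F$, which forces the $m+1$ vertices of $V\setminus F$ to include at least one vertex of the form $(p_j,\ast)$ for each $j\in[m]$. Because $F$ is a face, some facet contains it, so no index $j$ is missed; by pigeonhole, there is exactly one index $i_0$ for which $V\setminus F$ contains two vertices $(p_{i_0},a_1),(p_{i_0},a_2)$, and for every other index $j$ exactly one vertex $(p_j,c_j)$. Any $g\in\mathfrak{A}$ giving a facet through $F$ must then satisfy $g(p_j)=c_j$ for $j\neq i_0$ and $g(p_{i_0})\in\{a_1,a_2\}$, giving at most two facets through $F$.

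The main subtlety will be making sure the Alexander dual correspondence is applied in the right direction and that purity together with (possibly non-pure) shellability yields pure shellability, so that Bj\"orner's theorem applies cleanly. Once that is in place, [Bj, Theorem 11.4] gives that $\Delta(\mathfrak{A})$ is a PL ball or PL sphere, and the remark in Section~2 then upgrades this to the conclusion that $\Delta(\mathfrak{A})$ is a simplicial ball or a simplicial sphere.
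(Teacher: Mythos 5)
Your argument is correct and follows essentially the same route as the paper: linear quotients of $\mathcal{L}(\mathfrak{A})$ (via Lemma~\ref{weak} and \cite[Proposition 8.2.5]{HH}) give pure shellability of $\Delta(\mathfrak{A})$, the codimension-one face count shows each such face lies in at most two facets, and \cite[Theorem 11.4]{Bj} concludes. Your extra care about purity and the direction of the Alexander dual correspondence is sound but does not change the argument.
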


\begin{proof} Since $\mathcal{L}(\mathfrak{A})$ has linear quotients, it follows from \cite[Proposition 8.2.5]{HH} that $\Delta(\mathfrak{A})$ is a pure shellable complex. Let $G$ be a face of $\Delta(\mathfrak{A})$ of codimension 1. Then $|V\setminus G|=m+1$, and  there is exactly one element   $k$ in $[m]$  such that $|A_k\cap (V\setminus G)|=2$ and $|A_j\cap (V\setminus G)|=1$ for all $j\neq k$ with $j\in [m]$. Therefore, if $V\setminus \Gamma f$ is a facet of $\Delta(\mathfrak{A})$  containing $G$,   then $\Gamma f \subseteq V\setminus G$, and thus $f(p_i)\in A_i\cap (V\setminus G)$ for all $i\in [m]$.  It can be easily seen that there are at most two such $f$, and  the the result follows by  \cite[Theorem 11.4]{Bj}.
\end{proof}

\begin{Example} \em  Let $P=\{p,q\}$ be a poset with $p<q$, and  assume $A_{p}=[n]$ and $A_{q}=\{n\}$. Then $\Delta(\mathfrak{A})$ is an $(n-2)$-dimensional (simplicial) sphere. This is because the facets of $\Delta(\mathfrak{A})$ are all $(n-1)$-subsets of the $n$-set $\{(p,i)\mid i\in [n]\}$.
\end{Example}

\begin{Example}\label{ball} \em  Let $P=\{p,q\}$ be a poset with $p<q$, and  assume $A_{p}=[2]$ and $A_{q}=\{1,3\}$. Then, the facets of $\Delta(\mathfrak{A})$ are $F_1=\{(p,2),(q,3)\}$, $F_2=\{(p,2), (q,1)\}$ and $F_3=\{(p,1), (q,1)\}$. It is easy to verify that $\Delta(\mathfrak{A})$ is a simplicial  ball, and its boundary consists  of two points $(p,1)$ and $(q,3)$.
\end{Example}

Let $\Delta$ and $\Gamma$ be simplicial complexes on disjoint vertex sets. Then the {\it join} of $\Delta$ and $\Gamma$, denoted by $\Delta*\Gamma$, is the simplicial complex in which every face is the union of a face from $\Delta$ and a face from $\Gamma$. It is known from \cite[Equation 9.12]{Bj} that \begin{equation} \label{equ}\widetilde{H}_i(\Delta*\Gamma)\cong \bigoplus_{j+k=i-1}\widetilde{H}_j(\Delta)\otimes_{\mathbf{K}} \widetilde{H}_k(\Gamma).\end{equation}
\begin{Lemma} \label{join} Let $\Delta$ and $\Gamma$ be simplicial complexes on disjoint vertex sets.
Then \begin{enumerate}
       \item $\Delta*\Gamma$ is a homology sphere if and only if both $\Delta$ and $\Gamma$ are homology spheres.
       \item if $\Delta$ is a homology ball and $\Gamma$ is a homology sphere then $\Delta*\Gamma$ is a homology ball as well with $\partial (\Delta*\Gamma)=(\partial \Delta)*\Gamma$.
       \item if both $\Delta$ and $\Gamma$ are homology balls then $\Delta*\Gamma$ is a homology ball  with boundary of $\partial (\Delta*\Gamma)=(\partial \Delta)*(\partial \Gamma)$.
     \end{enumerate}
 \end{Lemma}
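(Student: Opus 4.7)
The main ingredients are two: (i) the identity
$$\lk_{\Delta * \Gamma}(F \cup G) \;=\; \lk_\Delta(F) \,*\, \lk_\Gamma(G) \qquad (F \in \Delta,\ G \in \Gamma),$$
which is immediate from the definition of a join, and (ii) the Künneth-type formula~\eqref{equ}. Together these let one compute every reduced homology group appearing in the definition of a homology sphere/ball purely in terms of the factors, and the three parts are then variations on the same case analysis.

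For (1), I would argue both directions separately. For the ``if'' direction, write a face of $\Delta * \Gamma$ uniquely as $F \cup G$ with $F \in \Delta$ and $G \in \Gamma$; its link is the join of $\lk_\Delta F$ and $\lk_\Gamma G$, which by hypothesis are homology spheres of dimensions $d_1$ and $d_2$. Plugging into~\eqref{equ}, the only possibly nonzero tensor summand occurs at $j = d_1,\, k = d_2$, giving $\mathbf{K} \otimes_{\mathbf{K}} \mathbf{K} = \mathbf{K}$ in degree $d_1 + d_2 + 1 = \dim \lk_{\Delta*\Gamma}(F \cup G)$, exactly the homology of a sphere of the right dimension. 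For the ``only if'' direction, take $F$ to be a facet of $\Delta$; then $\lk_{\Delta*\Gamma}(F) = \lk_\Delta(F) * \Gamma = \{\emptyset\} * \Gamma = \Gamma$, so $\Gamma$ inherits the homology-sphere property, and symmetrically for $\Delta$.

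For (2), set $\Sigma := (\partial \Delta) * \Gamma$. By part~(1) applied to the sphere $\partial \Delta$ and the sphere $\Gamma$, this $\Sigma$ is a homology sphere of dimension $(\dim \Delta - 1) + \dim \Gamma + 1 = \dim(\Delta * \Gamma) - 1$, the correct dimension for the boundary. For any face $H = F \cup G$ of $\Delta * \Gamma$, I would split into two cases. If $H \notin \Sigma$, then $F \notin \partial \Delta$, so both $\lk_\Delta F$ and $\lk_\Gamma G$ are homology spheres, and part~(1) applied to $\lk_{\Delta*\Gamma}(H)$ yields a homology sphere. If $H \in \Sigma$, then $F \in \partial \Delta$, so $\lk_\Delta F$ has vanishing reduced homology; feeding this into~\eqref{equ} kills every tensor summand and hence makes $\widetilde{H}_i(\lk_{\Delta*\Gamma}(H)) = 0$ for all $i$, as required.

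Part (3) follows the same template, with the candidate boundary built from $\partial \Delta$ and $\partial \Gamma$: dimension is checked via part~(1), and each face $F \cup G$ is placed in or out of the boundary by looking at whether $F \in \partial \Delta$ and/or $G \in \partial \Gamma$. Whenever at least one of $\lk_\Delta F, \lk_\Gamma G$ has zero reduced homology, formula~\eqref{equ} kills all summands, while in the remaining case both factors are homology spheres and part~(1) applies. The only genuine obstacle is the bookkeeping in~\eqref{equ}: one must check that a nonzero top-dimensional class appears exactly when both factors contribute, and that a single zero tensor factor annihilates the whole direct sum; everything else is formal manipulation of the link identity.
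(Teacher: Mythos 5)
Parts (1) and (2) of your proposal are correct and essentially reproduce the paper's own argument: the identity $\lk_{\Delta*\Gamma}(F\cup G)=\lk_{\Delta}F*\lk_{\Gamma}G$ combined with formula \eqref{equ}, and in (2) the same dichotomy according to whether the $\Delta$-part of a face lies in $\partial\Delta$. Your additional remarks (the facet trick in the ``only if'' direction of (1), the explicit dimension count for $(\partial\Delta)*\Gamma$) are harmless refinements of steps the paper leaves implicit.

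Part (3) is where there is a genuine gap, and your own case analysis exposes it. If, as the statement suggests, you take the candidate boundary to be $\Sigma=(\partial\Delta)*(\partial\Gamma)$, two things go wrong. First, the dimension check via part (1) gives $\dim\Sigma=\dim(\Delta*\Gamma)-2$, not codimension one. Second, a face $F\cup G$ with $F\in\partial\Delta$ but $G\notin\partial\Gamma$ lies outside $\Sigma$, yet by your own computation its link $\lk_{\Delta}F*\lk_{\Gamma}G$ has vanishing reduced homology (the first factor kills every summand in \eqref{equ}), so the homology-ball condition fails for this choice of $\Sigma$. Indeed, your case analysis identifies the faces with zero-homology links as exactly those with $F\in\partial\Delta$ \emph{or} $G\in\partial\Gamma$, i.e.\ the subcomplex $(\partial\Delta*\Gamma)\cup(\Delta*\partial\Gamma)$, and this union is the true boundary: for $\Delta$ and $\Gamma$ two edges, $\Delta*\Gamma$ is the solid tetrahedron, whose boundary is its full $2$-skeleton, whereas $(\partial\Delta)*(\partial\Gamma)$ is only a $4$-cycle, so the formula $(\partial\Delta)*(\partial\Gamma)$ printed in the lemma is itself a slip. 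Moreover, proving that the correct $\Sigma=(\partial\Delta*\Gamma)\cup(\Delta*\partial\Gamma)$ is a homology sphere of dimension $\dim(\Delta*\Gamma)-1$ is not a direct application of part (1): it is a union, not a join, and requires an extra gluing argument (e.g.\ Mayer--Vietoris over the intersection $(\partial\Delta)*(\partial\Gamma)$), both for $\Sigma$ itself and for the links of its faces inside $\Sigma$. To be fair, the paper disposes of (3) with the single sentence that it is ``similar to (2)'', so the gap is as much in the source as in your sketch --- but an honest execution of your template would force you to correct the boundary formula and to supply this additional argument.
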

\begin{proof} (1) By  definition, every link of a homology sphere is also a homology sphere.  Note that both $\Delta$ and $\Gamma$ are links of $\Delta*\Gamma$. So if $\Delta*\Gamma$ is a homology sphere then both $\Delta$ and $\Gamma$ are homology spheres. Conversely, letting $F=G\cup H$ with $G\in \Delta$ and $H\in \Gamma$, then $\mbox{Link}_{\Delta*\Gamma}F=\mbox{Link}_{\Delta}G*\mbox{Link}_{\Gamma}H$. Note that $\mathrm{dim}(\Delta*\Gamma)=\mathrm{dim}(\Delta)+\mathrm{dim}(\Gamma)+1$, and it follows from Formula (\ref{equ}) that  $$\widetilde{H}_i(\mbox{Link}_{\Delta*\Gamma}F)=\left\{
                                                                                                                                                                                                                               \begin{array}{ll}
                                                                                                                                                                                                                                 \mathbf{K}, & \hbox{$i=\mathrm{dim} (\mathrm{Link}_{\Delta*\Gamma}(F))$;} \\
                                                                                                                                                                                                                                 0, & \hbox{otherwise.}
                                                                                                                                                                                                                           \end{array}
                                                                                                                                                                                                                             \right.
$$
This implies that $\Delta*\Gamma$ is a homology sphere.

(2) If $F\in (\partial \Delta)*\Gamma$, then we may write $F=G\cup H$ with $G\in (\partial \Delta)$ and $H\in \Gamma$. Since $\mbox{Link}_{\Delta}G$ has zero homologies, it follows that $\mbox{Link}_{\Delta*\Gamma}F=\mbox{Link}_{\Delta}G*\mbox{Link}_{\Gamma}H$ also has zero homologies; If $F$ is a face of $\Delta*\Gamma$ with $F\notin (\partial \Delta)*\Gamma$, then $F=G\cup H$ with $G\in (\Delta \setminus \partial \Delta)$ and $H\in \Gamma$, thus  $\mbox{Link}_{\Delta}G$ and $\mbox{Link}_{\Gamma}H$ are both homology spheres. It follows that $\mbox{Link}_{\Delta*\Gamma}F$ is a homology sphere by (1). Therefore $\Delta*\Gamma$ is a homology ball.

(3) This can be proved in a similar manner as in (2).
\end{proof}

If there exists $a\in A_i$ for some $i\in [m]$ such that $f(p_i)\neq a$ for all $f\in \mathfrak{A}$, then $(p_i,a)$ belongs to every facets of $\Delta(\mathfrak{A})$. This implies $$\Delta(\mathfrak{A})=\mbox{Link}_{\Delta(\mathfrak{A})}(p_i,a)*\langle\{(p_i,a)\}\rangle,$$ and so $\Delta(\mathfrak{A})$ is  a simplicial ball by Lemma~\ref{join}.  Hence, it is more reasonable to consider $\Delta(\mathfrak{A})$ as a simplcial complex on the vertex set $$V':=\{(p_i,a)\mid i=1,\ldots,m, a\in B_i\},$$ where  $B_i:=\{a\in A_i\mid f(p_i)=a \mbox{ for some } f\in \mathfrak{A}\}.$
Under this assumption, we have the following result which includes  Example~\ref{ball} as a special case.

\begin{Proposition} \label{sphere} Let $B_1,\ldots,B_m$ and $V'$ be as defined above and consider $\Delta(\mathfrak{A})$ as a simplicial complex on the vertex set $V'$. Then $\Delta(\mathfrak{A})$ is a simplicial sphere if and only if $\max{(B_i)}\leq \min(B_j)$ whenever $p_i\leq p_j$ and $\mathfrak{A}=\mathrm{Hom}(P,B_1,\ldots,B_m)$.
\end{Proposition}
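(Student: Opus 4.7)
The plan is to reduce the sphere condition to a purely combinatorial ``swap'' property of $\mathfrak{A}$ and then verify each direction. Since $\mathcal{L}(\mathfrak{A})$ has linear quotients (Lemma~\ref{weak}), the previous proposition shows $\Delta(\mathfrak{A})$ is a simplicial ball or sphere, and by the shellability criterion already used there it is a sphere if and only if every codimension-one face lies in exactly two facets (not merely at most two). The facets on the vertex set $V'$ are $V' \setminus \Gamma g$ for $g \in \mathfrak{A}$; thus a codimension-one face has the form $F \setminus \{(p_k, a')\}$ with $F = V' \setminus \Gamma g$ and $a' \in B_k \setminus \{g(p_k)\}$. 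Because $\Gamma f$ contains exactly one element of $\{p_j\} \times A_j$ for each $j$, a second facet $V' \setminus \Gamma f$ containing this codimension-one face must satisfy $f = g'$, where $g'$ is obtained from $g$ by changing the value at $p_k$ to $a'$. Hence $\Delta(\mathfrak{A})$ is a sphere if and only if the \emph{swap property} holds: for every $g \in \mathfrak{A}$, every $k \in [m]$, and every $a' \in B_k \setminus \{g(p_k)\}$, the swap $g'$ belongs to $\mathfrak{A}$.

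The direction $(\Leftarrow)$ is then direct: given (a) and (b), the swap $g'$ lies in $\mathrm{Hom}(P, B_1, \ldots, B_m) = \mathfrak{A}$, since $g'(p_\ell) \in B_\ell$ for each $\ell$ and (a) forces isotony via $g'(p_i) \leq \max(B_i) \leq \min(B_j) \leq g'(p_j)$ whenever $p_i < p_j$. For $(\Rightarrow)$, I would first derive (a) by contradiction: if there are $p_i < p_j$ with $a \in B_i$, $b \in B_j$ and $a > b$, then any $g \in \mathfrak{A}$ with $g(p_j) = b$ satisfies $g(p_i) \leq b < a$, and the swap property then produces an element of $\mathfrak{A}$ sending $p_i$ to $a$ while still sending $p_j$ to $b$, contradicting isotony. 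For (b), the inclusion $\mathfrak{A} \subseteq \mathrm{Hom}(P, B_1, \ldots, B_m)$ is immediate from the definitions; for the reverse inclusion I fix any $g_0 \in \mathfrak{A}$ and, for $h \in \mathrm{Hom}(P, B_1, \ldots, B_m)$, inductively construct $g_i$ agreeing with $h$ on $p_1, \ldots, p_i$ and with $g_0$ elsewhere, invoking the swap property whenever $h(p_i) \neq g_{i-1}(p_i)$, so that $g_m = h \in \mathfrak{A}$.

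The main obstacle is the first reduction, namely recognizing that any second facet containing a given codimension-one face arises from a unique single-coordinate swap; this is what makes the combinatorial translation possible. Once that is set up, the remaining work splits cleanly into a contradiction argument for (a) and a step-by-step coordinate-change argument for (b).
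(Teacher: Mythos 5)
Your proof is correct, and the key reduction is sound: the only facets that can contain the codimension-one face $(V'\setminus\Gamma g)\setminus\{(p_k,a')\}$ are $V'\setminus\Gamma g$ and $V'\setminus\Gamma g'$ for the single-coordinate swap $g'$, so ``sphere $\Leftrightarrow$ every codimension-one face lies in exactly two facets $\Leftrightarrow$ $\mathfrak{A}$ is closed under swaps into the $B_k$'' is exactly the right bookkeeping, and your two derivations (the isotony contradiction for condition (a), the coordinate-by-coordinate induction from a fixed $g_0$ for condition (b)) both go through.

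Your route differs from the paper's in how the two implications are organized. The paper proves the ``if'' direction structurally: under (a) and (b) it identifies $\Delta(\mathfrak{A})$ with the join $\partial\langle B_1\rangle*\cdots*\partial\langle B_m\rangle$ and invokes Lemma~\ref{join}, which pins down the complex explicitly; for ``only if'' it exhibits, for each failure of (a) or (b), a specific codimension-one face whose link is a single point, using for (b) a minimal element of $\mathrm{Hom}(P,B_1,\ldots,B_m)\setminus\mathfrak{A}$ together with the already-proved condition (a) to find an element of $\mathfrak{A}$ differing in one entry. Your version runs both implications through the single combinatorial swap criterion, which makes the ``only if'' cleaner (your induction for (b) needs neither (a) nor a minimal counterexample, since membership of each intermediate map in $\mathfrak{A}$ is forced by the swap property itself). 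The price is on the ``if'' side: to pass from ``shellable, each codimension-one face in exactly two facets'' to ``sphere'' you need not only \cite[Theorem 11.4]{Bj} (ball or sphere) but also the fact that a PL ball in this dichotomy has nonempty boundary, i.e. some codimension-one face lying in exactly one facet; this is standard, but you should state it, whereas the paper's join argument avoids it entirely (the paper only needs the converse pseudomanifold fact --- that in a simplicial sphere no codimension-one face lies in a single facet --- which your ``only if'' direction uses as well). In short: your proof is a legitimate, uniform combinatorial alternative; the paper's proof trades that uniformity for an explicit structural description of $\Delta(\mathfrak{A})$ in the affirmative case.
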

\begin{proof}If $\max{(B_i)}\leq \min(B_j)$ whenever $p_i\leq p_j$ and $\mathfrak{A}=\mathrm{Hom}(P,B_1,\ldots,B_m)$, then $\Delta (\mathfrak{A})$ is the join $\partial\langle B_1\rangle*\partial\langle B_2\rangle*\cdots *\partial \langle B_m\rangle$. Here, $\langle B_i\rangle$ denotes the simplex on $B_i$. Hence, $\Delta (\mathfrak{A})$ is a simplicial sphere by Lemma~\ref{join}.

Conversely, we assume that $\Delta(\mathfrak{A})$ is a simplicial sphere. Suppose on the contrary that  $\max(B_i)>\min(B_j)$ for some $p_i,p_j\in P$ with $p_i<p_j$.
 Let $t=\max(B_{i})$, and  let $G=(V\setminus \Gamma f)\setminus \{(p_i,t)\}$. Here, $f$ is a map belonging to $\mathfrak{A}$ such that $f(p_j)=\min(B_j)$. (The existence of such  $f$ follows from the definition of $B_j$.) Then $G$ is a face of $\Delta(\mathfrak{A})$ of codimension 1. We claim that $\mathrm{Link}_{\Delta(\mathfrak{A})}G$ is a simplex of dimension 0.

 Let $g\in \mathfrak{A}$  such that $G\subseteq (V\setminus \Gamma g)$. Then we have $\Gamma g\subseteq \Gamma f\cup \{(p_i,t)\}$. This implies that $g(p)=f(p)$ if $p\neq p_i$ and $g(p_i)\in \{f(p_i),t\}$.
 Since  $t>f(p_j)=g(p_j)$ by assumption, it follows that  $g(p_i)=f(p_i)$. Therefore, we have  $f=g$, which implies $\mathrm{Link}_{\Delta(\mathfrak{A})}G$ is just a point (i.e., a simplex of dimension 0), as claimed. This leads to a contradiction, and thus we conclude $\max{(B_i)}\leq \min(B_j)$ whenever $p_i\leq p_j$.  It remains to show that $\mathfrak{A}=\mathrm{Hom}(P,B_1,\ldots,B_m)$.

Again, we suppose on the contrary that $\mathfrak{A}\neq \mathrm{Hom}(P,B_1,\ldots,B_m)$. Let $g$ be a minimal element in $\mathrm{Hom}(P,B_1,\ldots,B_m)\setminus \mathfrak{A}$. Then, since every map $f:P\rightarrow [n]$ satisfying $f(p_i)\in B_i$ for all $i\in [m]$  is isotone, there exists $f\in \mathfrak{A}$ such that $f$ and $g$ differ by a single entry. Namely, there exists $i\in [m]$ such that $f(p_i)<g(p_i)$ and $f(p_j)=g(p_j)$ for all $j\neq i$. Put $G:=(V\setminus \Gamma f)\setminus \{(p_i, g(p_i))\}$. Then the link of  $G$ is also just a point, which leads to a contradiction again. This completes the proof.\end{proof}

We conclude this paper with the following result, which classifies when $\Delta(\mathfrak{A})$ is a simplicial sphere in terms of  $\mathrm{pd}(\mathcal{L}(\mathfrak{A}))$.   Here,  $\mathrm{pd}(\mathcal{L}(\mathfrak{A}))$ denotes the  projective dimension of $\mathcal{L}(\mathfrak{A})$.

\begin{Corollary} Under the same assumptions as in Proposition~\ref{sphere}, we have:
\begin{enumerate}
  \item $\mathrm{pd}(\mathcal{L}(\mathfrak{A}))\leq |B_1|+\cdots+|B_m|-m;$
  \item $\mathrm{pd}(\mathcal{L}(\mathfrak{A}))=|B_1|+\cdots+|B_m|-m$ if and only if $\Delta(\mathfrak{A})$ is a simplicial sphere.
\end{enumerate}
\end{Corollary}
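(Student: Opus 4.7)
The plan is to read off $\mathrm{pd}(\mathcal{L}(\mathfrak{A}))$ directly from Theorem~\ref{main4.4}: this projective dimension equals the largest $t$ such that there exists a tuple $(K_1,\ldots,K_m)\in C_t(\mathfrak{A})$, and since $|K_1|+\cdots+|K_m|=m+t$, the task reduces to maximizing $\sum_i|K_i|$ subject to the four conditions defining $C_t(\mathfrak{A})$. Combining this description with Proposition~\ref{sphere} will yield both parts.

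The crucial step, which will establish (1), is to show that every admissible tuple satisfies $K_i\subseteq B_i$. To this end I will fix $a\in K_i$ and define $h\colon P\to[n]$ by $h(p_l)=\widehat{K_l}$ for $l\neq i$ and $h(p_i)=a$. The monotonicity condition $\max(K_l)\leq\min(K_r)$ for $p_l<p_r$, together with $a\leq\widehat{K_i}$, will make $h$ isotone with $h(p_l)\in A_l$ for all $l$, and the inequality $h\leq(\widehat{K_1},\ldots,\widehat{K_m})\in\mathfrak{A}$ will force $h\in\mathfrak{A}$ since $\mathfrak{A}$ is a poset ideal. Hence $a=h(p_i)\in B_i$. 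Summing over $i$ yields $\sum|K_i|\leq\sum|B_i|$, proving $\mathrm{pd}(\mathcal{L}(\mathfrak{A}))\leq|B_1|+\cdots+|B_m|-m$.

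For (2), equality in this bound forces $K_i=B_i$ for every $i$, so $\mathrm{pd}(\mathcal{L}(\mathfrak{A}))=|B_1|+\cdots+|B_m|-m$ precisely when $(B_1,\ldots,B_m)$ itself lies in $C_{|B_1|+\cdots+|B_m|-m}(\mathfrak{A})$; equivalently, when (a) $\max(B_i)\leq\min(B_j)$ for all $p_i<p_j$, and (b) $(\widehat{B_1},\ldots,\widehat{B_m})\in\mathfrak{A}$. If $\Delta(\mathfrak{A})$ is a simplicial sphere, then Proposition~\ref{sphere} delivers (a) together with $\mathfrak{A}=\mathrm{Hom}(P,B_1,\ldots,B_m)$, which contains $(\widehat{B_1},\ldots,\widehat{B_m})$, yielding (b). For the converse I will assume (a) and (b) and prove $\mathfrak{A}=\mathrm{Hom}(P,B_1,\ldots,B_m)$, from which the sphere conclusion follows by Proposition~\ref{sphere}. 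The inclusion $\mathfrak{A}\subseteq\mathrm{Hom}(P,B_1,\ldots,B_m)$ is immediate from the very definition of $B_i$; for the reverse inclusion, any $f\in\mathrm{Hom}(P,B_1,\ldots,B_m)$ satisfies $f(p_i)\leq\widehat{B_i}$ for each $i$, hence $f\leq(\widehat{B_1},\ldots,\widehat{B_m})\in\mathfrak{A}$, and the poset-ideal property gives $f\in\mathfrak{A}$.

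I do not anticipate a significant obstacle: both assertions reduce to bookkeeping on top of Theorem~\ref{main4.4} and Proposition~\ref{sphere}. The only step requiring a genuine construction is the inclusion $K_i\subseteq B_i$, and there the explicit isotone witness $h\in\mathfrak{A}$ built above does all the work.
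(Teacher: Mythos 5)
Your proposal is correct and follows exactly the route the paper intends: the paper's own proof is the one-line remark that the corollary follows from Theorem~\ref{main4.4} and Proposition~\ref{sphere}, and your argument simply supplies the omitted bookkeeping (in particular the inclusion $K_i\subseteq B_i$ via the isotone witness $h$, which is the right way to make the bound precise). No gaps; the details check out.
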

\begin{proof} It follows from Theorem~\ref{main4.4} together with Proposition~\ref{sphere}.
\end{proof}

{\bf \noindent Acknowledgment:}
This research is supported by NSFC (No. 11971338). We would like to express our heartfelt thanks to the reviewer for  careful reading, insightful comments and invaluable suggestions, which lead to much better presentation of our paper.

\end{document}